\definecolor{webgreen}{rgb}{0,.5,0}
\definecolor{webbrown}{rgb}{.6,0,0}
\begin{document}

\theoremstyle{plain}
\newtheorem{theorem}{Theorem}
\newtheorem{corollary}[theorem]{Corollary}
\newtheorem{lemma}{Lemma}
\newtheorem{example}{Example}
\newtheorem*{remark}{Remark}

\begin{center} 
\vskip 1cm
{\LARGE\bf Fibonacci sums modulo 5 \\ }

\vskip 1cm

{\large
Kunle Adegoke \\
Department of Physics and Engineering Physics \\ Obafemi Awolowo University, 220005 Ile-Ife, Nigeria \\
\href{mailto:adegoke00@gmail.com}{\tt adegoke00@gmail.com}

\vskip 0.2 in

Robert Frontczak \\
Independent Researcher \\ Reutlingen,  Germany \\
\href{mailto:robert.frontczak@web.de}{\tt robert.frontczak@web.de}

\vskip 0.2 in

Taras Goy  \\
Faculty of Mathematics and Computer Science \\ Vasyl Stefanyk Precarpathian National University, Ivano-Frankivsk, Ukrai\-ne \\
\href{mailto:taras.goy@pnu.edu.ua}{\tt taras.goy@pnu.edu.ua}
}

\end{center}

\vskip .2 in

\begin{abstract}
We develop closed form expressions for various finite binomial Fibonacci and Lucas sums depending 
on the modulo 5 nature of the upper summation limit. Our expressions are inferred from some trigonometric identities.
\end{abstract}

\noindent 2020 {\it Mathematics Subject Classification}: Primary 11B39; Secondary 11B37.
\vspace{2pt}

\noindent \emph{Keywords:} Fibonacci number, Lucas number, Bernoulli polynomial, Chebyshev  polynomial, trigonometric identity, binomial sum.

\bigskip

\section{Preliminaries}

As usual, the Fibonacci numbers $F_n$ and the Lucas numbers $L_n$ are defined, for $n\in\mathbb Z$, 
by the following recurrence relations for $n\ge 2$: 
\begin{align*}
&F_n = F_{n-1}+F_{n-2},\quad F_0=0,\,\,\, F_1=1,\\
&L_n = L_{n-1}+L_{n-2},\quad L_0=2,\,\,\, L_1=1.
\end{align*}
For negative subscripts we have $
F_{-n}=(-1)^{n-1}F_n$ and $L_{-n}=(-1)^n L_n$.

Throughout this paper, we denote the golden ratio by $\alpha=\frac{1+\sqrt 5}2$ and write $\beta=\frac{1-\sqrt 5}2=-\frac{1}{\alpha}$. 
The Fibonacci and Lucas numbers possess the explicit formulas (Binet forms)
\begin{equation*}
F_n = \frac{\alpha^n - \beta^n }{\alpha - \beta },\quad L_n = \alpha^n + \beta^n,\quad n\in\mathbb Z.
\end{equation*} 

The sequences $\{F_n\}_{n\geq 0}$ and $\{L_n\}_{n\geq 0}$ are indexed in the On-Line Encyclopedia of Integer Sequences 
\cite{OEIS} as entries {A000045} and A000032, respectively. 
For more information we refer to Koshy \cite{Koshy} and Vajda \cite{Vajda} who have written excellent books dealing with Fibonacci and Lucas numbers. 

There exists a countless number of binomial sums involving Fibonacci and Lucas numbers.
For some new articles in this field we refer to the papers \cite{Adegoke1,Adegoke2,Adegoke3,Bai}.

In this paper, we introduce closed form expressions for finite Fibonacci and Lucas sums involving different kinds of binomial coefficients and depending on the the modulo 5 nature of the upper summation limit. Our expressions are derived from various trigonometric identities, particularly utilizing Waring formulas and Chebyshev polynomials of the first and second kinds. We also present some series involving Bernoulli polynomials. 

We note that some of our results were  announced without proofs in \cite{Kravchuk_Conf}.

\section{Fibonacci sums modulo 5 from the $\sin nx$ and $\cos nx$ expansions}

We begin with a known lemma \cite[1.331(3) and 1.331(1)]{grad07}.
\begin{lemma} If $n$ is a positive integer, then
\begin{align}
&\sum_{k = 1}^{\left\lfloor {n/2} \right\rfloor } {\frac{{( - 1)^{k - 1} n}}{k}\binom{n - k - 1}{k - 1}2^{n - 2k - 1} \cos ^{n - 2k} x}  = 2^{n - 1} \cos ^n x - \cos nx, \label{eq.zbuts4b}\\
&\sum_{k = 0}^{\left\lfloor {(n - 1)/2} \right\rfloor } {( - 1)^k \binom{n - k - 1}k2^{n - 2k - 1} \cos ^{n - 2k - 1} x}  = \frac{{\sin  nx}}{\sin x}\label{eq.dbuskhn}.
\end{align}
\end{lemma}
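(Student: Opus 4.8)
The two identities are the classical Waring-type expansions of $\cos nx$ and of $\sin nx/\sin x$ in powers of $\cos x$; they are equivalent to the explicit coefficient formulas for the Chebyshev polynomials $T_n$ and $U_{n-1}$. I would prove both simultaneously by induction on $n$, since $\cos nx$ and $\sin nx$ satisfy the same linear recurrence $f_{n+1}=2\cos x\, f_n - f_{n-1}$.

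**Step 1: Reformulate.** Writing $c=\cos x$, identity \eqref{eq.dbuskhn} asserts
\[
\frac{\sin nx}{\sin x}=\sum_{k=0}^{\lfloor (n-1)/2\rfloor}(-1)^k\binom{n-k-1}{k}(2c)^{n-2k-1},
\]
i.e. the right-hand side is $U_{n-1}(c)$. Identity \eqref{eq.zbuts4b} then says
\[
\cos nx = 2^{n-1}c^{\,n}-\sum_{k=1}^{\lfloor n/2\rfloor}\frac{(-1)^{k-1}n}{k}\binom{n-k-1}{k-1}2^{n-2k-1}c^{\,n-2k},
\]
i.e. $T_n(c)$ has that closed form. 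So I would first state that it suffices to verify these two polynomial identities for $c$ in a nonempty interval, hence identically.

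**Step 2: Prove \eqref{eq.dbuskhn} by induction.** Let $S_n(c)$ denote the right-hand side of \eqref{eq.dbuskhn}. Check $S_1=1=\sin x/\sin x$ and $S_2=2c=\sin 2x/\sin x$. For the inductive step, since $\sin(n+1)x/\sin x = 2c\cdot \sin nx/\sin x - \sin(n-1)x/\sin x$, it suffices to show $S_{n+1}(c)=2c\,S_n(c)-S_{n-1}(c)$. Substituting the sums and comparing the coefficient of $(2c)^{n-2k}$ on both sides reduces to the Pascal-type identity
\[
\binom{n-k-1}{k}+\binom{n-k-1}{k-1}=\binom{n-k}{k},
\]
together with careful bookkeeping of the floor bounds in the two parity cases $n$ even / $n$ odd. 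This is the one place where the argument is fiddly rather than deep: one must check that the extreme terms of the three sums line up correctly with the shifting summation ranges.

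**Step 3: Derive \eqref{eq.zbuts4b} from \eqref{eq.dbuskhn}.** Rather than a second induction, I would use the elementary identity $\cos nx = \cos x\,\dfrac{\sin nx}{\sin x}-\dfrac{\sin(n-1)x}{\sin x}$ (from $\sin nx = \sin(n-1)x\cos x+\cos(n-1)x\sin x$ rearranged, or directly from product-to-sum). Plugging in the closed form from Step 2 for both $\sin nx/\sin x$ and $\sin(n-1)x/\sin x$, multiplying the first by $c$, and collecting powers of $c$ yields a single sum; extracting the leading term $2^{n-1}c^{\,n}$ and simplifying the binomial combination
\[
\frac12\binom{n-k-1}{k}+\binom{n-k-2}{k-1}=\frac{n}{2k}\binom{n-k-1}{k-1}
\]
(a standard absorption identity) produces exactly the coefficients in \eqref{eq.zbuts4b}. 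The main obstacle, as in Step 2, is purely the index gymnastics: matching $\lfloor n/2\rfloor$ against $\lfloor (n-1)/2\rfloor$ and $\lfloor (n-2)/2\rfloor$ in the even and odd cases, and confirming no stray boundary term survives. Everything else is routine.
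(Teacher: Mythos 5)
The paper does not actually prove this lemma at all: it is quoted verbatim from Gradshteyn--Ryzhik, entries 1.331(3) and 1.331(1), so your self-contained argument is a genuinely different (and more informative) route. Your plan is sound: \eqref{eq.dbuskhn} is exactly the coefficient formula for $U_{n-1}(\cos x)$, the three-term recurrence reduces the induction to Pascal's rule with the boundary terms behaving as you say, and the relation $\cos nx=\cos x\,\frac{\sin nx}{\sin x}-\frac{\sin(n-1)x}{\sin x}$ (i.e.\ $T_n=xU_{n-1}-U_{n-2}$) then delivers \eqref{eq.zbuts4b} without a second induction. The one concrete error is the auxiliary binomial identity you quote in Step 3: as written,
\[
\tfrac12\tbinom{n-k-1}{k}+\tbinom{n-k-2}{k-1}=\tfrac{n}{2k}\tbinom{n-k-1}{k-1}
\]
is false in general (take $n=6$, $k=2$: the left side is $\tfrac72$, the right side is $\tfrac92$). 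The slip is an off-by-one in the reindexing of the $U_{n-2}$ sum: the term of $U_{n-2}$ contributing to $c^{\,n-2k}$ is the one with index $k-1$, whose binomial coefficient is $\binom{n-2-(k-1)}{k-1}=\binom{n-k-1}{k-1}$, not $\binom{n-k-2}{k-1}$. The identity you actually need is
\[
\tfrac12\tbinom{n-k-1}{k}+\tbinom{n-k-1}{k-1}=\tfrac{n}{2k}\tbinom{n-k-1}{k-1},
\]
which follows at once from the absorption relation $\binom{n-k-1}{k}=\frac{n-2k}{k}\binom{n-k-1}{k-1}$, since $\frac{n-2k}{2k}+1=\frac{n}{2k}$. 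With that correction the coefficient of $c^{\,n-2k}$ comes out to $(-1)^k\frac{n}{k}\binom{n-k-1}{k-1}2^{\,n-2k-1}$ as required, including the boundary case $k=\lfloor n/2\rfloor$ for $n$ even, and the proof is complete.
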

\begin{lemma}
If $n$ is an integer, then
\begin{align}
&\cos{\Big(\frac{n\pi}{5}\Big)} =  \begin{cases}
  (-1)^n,&\text{\rm if $n\equiv 0\pmod 5$};  \\ 
  (-1)^{n - 1}\alpha/2,&\text{\rm if $n\equiv 1$ or $4\pmod 5$};  \\ 
  (-1)^{n - 1}\beta/2,&\text{\rm if $n\equiv 2$ or $3\pmod 5$}; \label{eq.kxkaziy}  
 \end{cases}\\
 &\cos{\Big(\frac{2n\pi}{5}\Big)} =  \begin{cases}
  1,&\text{\rm if $n\equiv 0\pmod 5$};  \\ 
  -\beta/2,&\text{\rm if $n\equiv 1$ or $4\pmod 5$};  \\ 
  -\alpha/2,&\text{\rm if $n\equiv 2$ or $3\pmod 5$}.\label{eq.argxf9e} 
 \end{cases}
\end{align}
\end{lemma}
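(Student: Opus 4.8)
The plan is to deduce everything from the two base evaluations $\cos(\pi/5)=\alpha/2$ and $\cos(2\pi/5)=-\beta/2$, and then to spread these over the residue classes using the reflection formula $\cos(t+\pi)=-\cos t$ together with the $2\pi$-periodicity of the cosine.

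First I would pin down the base values. Put $c=\cos(\pi/5)$. From $5\cdot(\pi/5)=\pi$ we get $3\cdot(\pi/5)=\pi-2\cdot(\pi/5)$, hence $\cos(3\pi/5)=-\cos(2\pi/5)$; applying the triple- and double-angle formulas gives $4c^3-3c=-(2c^2-1)$, that is, $(c+1)(4c^2-2c-1)=0$. Since $c>0$ we discard the root $c=-1$ and solve $4c^2-2c-1=0$, obtaining $c=\frac{1+\sqrt5}{4}=\alpha/2$. Then $\cos(2\pi/5)=2c^2-1=\frac{\alpha^2-2}{2}$, and since $\alpha^2=\alpha+1$ and $\alpha-1=-\beta$ this equals $-\beta/2$. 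The reflection $\cos(t+\pi)=-\cos t$ additionally records $\cos(3\pi/5)=\beta/2$ and $\cos(4\pi/5)=-\alpha/2$, while $\cos 0=1$ is immediate.

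For \eqref{eq.kxkaziy}, write $n=5q+r$ with $r\in\{0,1,2,3,4\}$, so that $\cos(n\pi/5)=\cos(q\pi+r\pi/5)=(-1)^q\cos(r\pi/5)$. Since $n-r=5q\equiv q\pmod 2$, we have $(-1)^q=(-1)^{n}(-1)^{r}$; inserting the base values of $\cos(r\pi/5)$ from the previous step, the factor $(-1)^{r}$ combines with the constant to give $(-1)^{n}$ when $r=0$ and $(-1)^{n-1}$ in each of the other four cases, which is exactly the three-way split claimed. Finally, \eqref{eq.argxf9e} follows from \eqref{eq.kxkaziy} by replacing $n$ with $2n$: as $2$ is invertible modulo $5$, one has $2n\equiv0$ iff $n\equiv0$, $2n\equiv1$ or $4$ iff $n\equiv3$ or $2$, and $2n\equiv2$ or $3$ iff $n\equiv1$ or $4$; since $(-1)^{2n}=1$ and $(-1)^{2n-1}=-1$, all signs collapse to the stated constants $1$, $-\alpha/2$, and $-\beta/2$.

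I expect no genuine obstacle here: the only real computation is the quadratic solved in the first step, and the passage to arbitrary $n$ is just bookkeeping with parity and periodicity. The one place to be careful is matching the parity of $q$ with that of $n$, which is handled by the identity $n-r=5q$.
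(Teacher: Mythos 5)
Your proof is correct and follows essentially the same route as the paper: reduce to the special values $\cos(r\pi/5)$ for $r=0,\dots,4$, propagate to general $n$ via the addition formula (in your case, the reflection $\cos(t+\pi)=-\cos t$ plus $2\pi$-periodicity, with the parity bookkeeping $(-1)^q=(-1)^{n-r}$ handled correctly), and obtain \eqref{eq.argxf9e} from \eqref{eq.kxkaziy} by the substitution $n\mapsto 2n$. The only difference is that you also derive the base values $\cos(\pi/5)=\alpha/2$, $\cos(2\pi/5)=-\beta/2$ from the factorization $(c+1)(4c^2-2c-1)=0$, which the paper simply quotes; that makes your argument self-contained but does not change the method.
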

\begin{proof}
Relations stated in \eqref{eq.kxkaziy} can be proved easily by elementary methods. 
For instance, they follow by applying the addition theorem for the cosine function
\begin{equation*}
\cos (a+b) = \cos a\cos b - \sin a\sin b
\end{equation*}
combined with the special values
\begin{equation*}
\cos \Big (\frac{\pi}{5} \Big ) = \frac{\alpha}{2},\quad \cos \Big (\frac{2\pi}{5} \Big ) = - \frac{\beta}{2}, \quad 
\cos \Big (\frac{3 \pi}{5} \Big ) = \frac{\beta}{2}, \quad \cos \Big (\frac{4 \pi}{5} \Big ) = -\frac{\alpha}{2}.
\end{equation*}

Relations stated in \eqref{eq.argxf9e} follow directly from \eqref{eq.kxkaziy}. 
\end{proof}

In our first main results we state Lucas (Fibonacci) identities involving binomial coefficient and  additional parameter.
\begin{theorem}\label{th1}
If $n$ is a positive integer and $t$ is any integer, then
\begin{align*}
&n\sum_{k = 1}^{\left\lfloor {n/2} \right\rfloor } \frac{{( - 1)^{k - 1}}}{k}\binom{n - k - 1}{k - 1}L_{n - 2k + t} 
  =  \begin{cases}
  L_{n + t}  - ( - 1)^{n} 2L_t,&\text{\rm if $n\equiv 0\pmod 5$};  \\ 
  L_{n + t}  + ( - 1)^n L_{t + 1},&\text{\rm if $n\equiv 1$ or $4\pmod 5$};  
  \\ 
  L_{n + t}  - ( - 1)^{n} L_{t - 1},&\text{\rm if $n\equiv 2$ or $3\pmod 5$}; 
  \end{cases} \\
&n\sum_{k = 1}^{\left\lfloor {n/2} \right\rfloor } {\frac{{( - 1)^{k - 1}}}{k}\binom{n - k - 1}{k - 1}F_{n - 2k + t} }
  =  \begin{cases}
   F_{n + t}  - ( - 1)^{n} 2F_t,&\text{\rm if $n\equiv 0\pmod 5$};  \\ 
   F_{n + t}  + ( - 1)^n F_{t + 1},&\text{\rm if $n\equiv 1$ or $4\pmod 5$};  \\ 
    F_{n + t}  - ( - 1)^{n} F_{t - 1},&\text{\rm if $n\equiv 2$ or $3\pmod 5$}.
\end{cases}
\end{align*}
\end{theorem}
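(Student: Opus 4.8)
The plan is to specialize the polynomial identity \eqref{eq.zbuts4b} at the two angles for which $2\cos x$ equals $\alpha$ and $\beta$, and then recombine the resulting algebraic identities via the Binet forms. Putting $x=\pi/5$ in \eqref{eq.zbuts4b} and using $\cos(\pi/5)=\alpha/2$ (one of the special values recorded in the proof of the preceding lemma), so that $2^{\,n-2k-1}\cos^{n-2k}(\pi/5)=\tfrac12\alpha^{n-2k}$ and $2^{\,n-1}\cos^{n}(\pi/5)=\tfrac12\alpha^{n}$, and then multiplying through by $2\alpha^{t}$, gives
\[
n\sum_{k=1}^{\lfloor n/2\rfloor}\frac{(-1)^{k-1}}{k}\binom{n-k-1}{k-1}\alpha^{\,n-2k+t}=\alpha^{\,n+t}-2\alpha^{t}\cos\Big(\frac{n\pi}{5}\Big).
\]
The same computation with $x=3\pi/5$, where $\cos(3\pi/5)=\beta/2$, followed by multiplication by $2\beta^{t}$, yields the companion identity with $\alpha$ replaced by $\beta$ throughout and $\cos(n\pi/5)$ replaced by $\cos(3n\pi/5)$. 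Both identities hold for every integer $t$, since $\alpha,\beta\neq 0$.

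Adding the two identities and using $L_{m}=\alpha^{m}+\beta^{m}$ gives
\[
n\sum_{k=1}^{\lfloor n/2\rfloor}\frac{(-1)^{k-1}}{k}\binom{n-k-1}{k-1}L_{\,n-2k+t}=L_{\,n+t}-2\alpha^{t}\cos\Big(\frac{n\pi}{5}\Big)-2\beta^{t}\cos\Big(\frac{3n\pi}{5}\Big),
\]
while subtracting them and dividing by $\alpha-\beta=\sqrt5$ produces the same statement with $L$ replaced by $F$ and the last two terms replaced by $-\tfrac{2}{\sqrt5}\bigl(\alpha^{t}\cos(n\pi/5)-\beta^{t}\cos(3n\pi/5)\bigr)$. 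It then remains to evaluate this trigonometric tail according to the residue of $n$ modulo $5$: $\cos(n\pi/5)$ is read off directly from \eqref{eq.kxkaziy}, and $\cos(3n\pi/5)$ is obtained by applying \eqref{eq.kxkaziy} to $3n$ after reducing $3n$ modulo $5$. The structural point is that $n\equiv 1,4$ carry $3n$ into the class $\{2,3\}\bmod 5$, whereas $n\equiv 2,3$ carry $3n$ into $\{1,4\}\bmod 5$, so the $\alpha$ and $\beta$ occurring in the two cosine factors get interchanged. Using $\alpha\beta=-1$ together with the Binet forms, each case collapses: $n\equiv 0$ gives tail $-2(-1)^{n}L_{t}$; $n\equiv 1,4$ gives $(-1)^{n}L_{t+1}$ (since $\alpha^{t+1}+\beta^{t+1}=L_{t+1}$); and $n\equiv 2,3$ gives $-(-1)^{n}L_{t-1}$ (since $\alpha^{t}\beta+\alpha\beta^{t}=\alpha\beta(\alpha^{t-1}+\beta^{t-1})=-L_{t-1}$), with the parallel relations $\alpha^{m}-\beta^{m}=\sqrt5\,F_{m}$ handling the Fibonacci half.

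The substitution step and the Binet recombination are immediate; the only place needing real care is the final case analysis, where one must keep track of the sign identity $(-1)^{3n-1}=(-1)^{n-1}$ and the $\alpha\leftrightarrow\beta$ swap in the $\cos(3n\pi/5)$ factor. No convergence or domain concerns arise, as \eqref{eq.zbuts4b} is a polynomial identity in $\cos x$ and all subsequent manipulations are purely algebraic.
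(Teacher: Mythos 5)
Your proof is correct and takes essentially the same route as the paper: specialize the polynomial identity \eqref{eq.zbuts4b} at angles where $2\cos x$ equals $\alpha$ or $\beta$, evaluate the residual cosine by \eqref{eq.kxkaziy}, and recombine via the Binet forms. The only cosmetic difference is that the paper substitutes $x=\pi/5$ alone and then separates the rational and $\sqrt5$ components using \eqref{eq.fieo8cp}, whereas you derive the conjugate $\beta$-identity by a second substitution at $x=3\pi/5$; your tracking of $3n\bmod 5$ and the resulting $\alpha\leftrightarrow\beta$ swap is accurate.
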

\begin{proof}
Set $x=\pi/5$ in \eqref{eq.zbuts4b} and use \eqref{eq.kxkaziy} and the fact that
\begin{equation}\label{eq.fieo8cp}
2\alpha^r =L_r + F_r\sqrt 5,\quad 2\beta^r =L_r - F_r\sqrt 5
\end{equation}
for any integer $r$.
\end{proof}

We proceed with some corollaries.
\begin{corollary}  
If $n$ is a positive integer, then
\begin{align*}
&n\sum_{k = 1}^{\left\lfloor {n/2} \right\rfloor } {\frac{{( - 1)^{k - 1}}}{k}\binom{n - k - 1}{k - 1}F_{2k} }
  =  \begin{cases}
   -2F_n,&\text{\rm if $n\equiv 0\pmod 5$};  \\ 
   -F_{n - 1},&\text{\rm if $n\equiv 1$ 
   	or $4\pmod 5$};
   	\\ 
   F_{n + 1},&\text{\rm if $n\equiv 2$ or $3\pmod 5$};  
   \end{cases}\\
&n\sum_{k = 1}^{\left\lfloor {n/2} \right\rfloor } {\frac{{( - 1)^{k - 1}}}{k}\binom{n - k - 1}{k - 1}F_{n - 2k + \delta} }=F_{n+\delta},
\end{align*}
where
\begin{equation*}
\delta=  \begin{cases}
   0,&\text{\rm if $n\equiv 0\pmod 5$};  \\ 
   -1,&\text{\rm if $n\equiv 1$ or $4\pmod 5$};\\  
    1,&\text{\rm if $n\equiv 2$ or $3\pmod 5$}. 
   \end{cases}
\end{equation*}
\end{corollary}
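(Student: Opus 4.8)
The plan is to obtain both identities as direct specializations of the Fibonacci statement in Theorem~\ref{th1}, choosing the free parameter $t$ so that the ``boundary'' term on the right-hand side collapses to $0$.

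For the second identity I would simply take $t=\delta$ in the Fibonacci part of Theorem~\ref{th1}. In each residue class the extra term on the right involves $F_\delta$, $F_{\delta+1}$, or $F_{\delta-1}$, and the value of $\delta$ is exactly the one that forces the relevant index to be $0$: when $n\equiv 0\pmod 5$ we have $\delta=0$ and $-(-1)^n2F_t=-(-1)^n2F_0=0$; when $n\equiv 1,4\pmod 5$ we have $\delta=-1$ and $(-1)^nF_{t+1}=(-1)^nF_0=0$; when $n\equiv 2,3\pmod 5$ we have $\delta=1$ and $-(-1)^nF_{t-1}=-(-1)^nF_0=0$. In every case what survives on the right is precisely $F_{n+\delta}$, which is the claim; the left-hand side is literally the displayed sum with $t=\delta$.

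For the first identity I would instead set $t=-n$ and invoke the reflection rule $F_{-m}=(-1)^{m-1}F_m$. Since $F_{n-2k+t}=F_{-2k}=(-1)^{2k-1}F_{2k}=-F_{2k}$, the left side of the Fibonacci part of Theorem~\ref{th1} becomes $-n\sum_{k=1}^{\lfloor n/2\rfloor}\frac{(-1)^{k-1}}{k}\binom{n-k-1}{k-1}F_{2k}$, while $F_{n+t}=F_0=0$ removes the ``plain'' term on the right. It then remains to evaluate the surviving boundary term: $-(-1)^n2F_{-n}$ when $n\equiv0\pmod5$, $(-1)^nF_{-n+1}$ when $n\equiv1,4\pmod5$, and $-(-1)^nF_{-n-1}$ when $n\equiv2,3\pmod5$. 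Applying the reflection rule again and simplifying the parity factors turns these into $2F_n$, $F_{n-1}$, and $-F_{n+1}$ respectively; dividing the equation through by $-1$ yields the stated values $-2F_n$, $-F_{n-1}$, $F_{n+1}$.

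The only thing requiring care — and the closest thing to an obstacle — is keeping the parity exponents produced by the reflection formula straight across the three residue classes, since a single sign slip flips an answer. Writing out $F_{-n}=(-1)^{n-1}F_n$, $F_{-n+1}=(-1)^nF_{n-1}$, $F_{-n-1}=(-1)^nF_{n+1}$ and then using $(-1)^n(-1)^{n-1}=-1$ and $(-1)^n(-1)^n=1$ explicitly removes any ambiguity. Beyond this bookkeeping there is no analytic content not already contained in Theorem~\ref{th1}.
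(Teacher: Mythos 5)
Your proposal is correct and follows the route the paper intends (the corollary is stated without proof as a direct specialization of Theorem~\ref{th1}): taking $t=\delta$ kills the boundary term via $F_0=0$ for the second identity, and taking $t=-n$ with the reflection rule $F_{-m}=(-1)^{m-1}F_m$ gives the first. All three sign computations in your $t=-n$ case check out.
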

\begin{corollary}
If $n$ is a positive integer, then
\begin{align*}
&n\sum_{k = 1}^{\left\lfloor {n/2} \right\rfloor} \frac{{( - 1)^{k - 1} }}{k}\binom{n - k - 1}{k - 1}L_{n - 2k - 1} 
  =  \begin{cases}
  L_{n - 1}  - ( - 1)^{n} 3,&\text{\rm if $n\equiv 2$ or $3\pmod 5$};\\
  L_{n - 1}  + ( - 1)^n 2,&\text{\rm otherwise};
  \end{cases}\\ 
&n\sum_{k = 1}^{\left\lfloor {n/2} \right\rfloor } \frac{{( - 1)^{k - 1} }}{k}\binom{n - k - 1}{k - 1}L_{n - 2k + 1} 
  =  \begin{cases}
  L_{n + 1}  + ( - 1)^n 3,&\text{\rm if $n\equiv 1$ or $4\pmod 5$};\\ 
 	L_{n + 1} - ( - 1)^{n} 2,&\text{\rm otherwise}; 
 	 \end{cases}\\ 
&n\sum_{k = 1}^{\left\lfloor {n/2} \right\rfloor } {\frac{{( - 1)^{k - 1}}}{k}\binom{n - k - 1}{k - 1}L_{n - 2k} }
  =  \begin{cases}
  L_n  - ( - 1)^{n} 4,&\text{\rm if $n\equiv 0\pmod 5$};  \\ 
  L_n  + ( - 1)^n,&\text{\rm otherwise}.
 	 \end{cases}
\end{align*}
\end{corollary}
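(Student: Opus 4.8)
The plan is to read off all three identities as immediate specializations of the Lucas half of Theorem~\ref{th1}, taking $t=-1$, $t=1$, and $t=0$ in turn, and then to simplify each right-hand side using the small-index Lucas values $L_{-2}=3$, $L_{-1}=-1$, $L_0=2$, $L_1=1$, $L_2=3$ (obtained from the defining recurrence together with the reflection formula $L_{-m}=(-1)^mL_m$ recorded in the Preliminaries). So no new machinery is needed; the work is purely in matching up the case distinctions.

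For the first identity I would put $t=-1$ in the first display of Theorem~\ref{th1}. The $n\equiv 0$ branch becomes $L_{n-1}-(-1)^n2L_{-1}=L_{n-1}+(-1)^n2$; the $n\equiv 1,4$ branch becomes $L_{n-1}+(-1)^nL_0=L_{n-1}+(-1)^n2$; and the $n\equiv 2,3$ branch becomes $L_{n-1}-(-1)^nL_{-2}=L_{n-1}-(-1)^n3$. Since the first two outcomes coincide, they amalgamate into the single ``otherwise'' case, yielding exactly the stated formula. For the second identity I would set $t=1$: the $n\equiv 0$ branch gives $L_{n+1}-(-1)^n2L_1=L_{n+1}-(-1)^n2$, the $n\equiv 1,4$ branch gives $L_{n+1}+(-1)^nL_2=L_{n+1}+(-1)^n3$, and the $n\equiv 2,3$ branch gives $L_{n+1}-(-1)^nL_0=L_{n+1}-(-1)^n2$; here it is the first and third that agree, and merging them produces the claimed statement. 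For the third identity I would set $t=0$: the $n\equiv 0$ branch gives $L_n-(-1)^n2L_0=L_n-(-1)^n4$, the $n\equiv 1,4$ branch gives $L_n+(-1)^nL_1=L_n+(-1)^n$, and the $n\equiv 2,3$ branch gives $L_n-(-1)^nL_{-1}=L_n+(-1)^n$; now the last two agree and collapse into the ``otherwise'' case.

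The argument has no genuine obstacle; the only point requiring a little care is the bookkeeping, namely verifying that in each of the three cases exactly one pair of the three residue-class branches of Theorem~\ref{th1} collapses (and that it is the pair indicated in the corollary), which is precisely what the small Lucas-value evaluations above confirm.
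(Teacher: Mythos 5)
Your proposal is correct and is exactly the argument the paper intends (the corollary is stated without proof as an immediate specialization of Theorem~\ref{th1}): taking $t=-1,1,0$ in the Lucas identity and evaluating $L_{-2}=3$, $L_{-1}=-1$, $L_0=2$, $L_1=1$, $L_2=3$ gives precisely the stated case collapses.
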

\begin{lemma}\label{lem3}
If $n$ is an integer, then
\begin{align}
&\frac{\sin{(n\pi/5)}}{\sin{(\pi/5)}} =  \begin{cases}
  0,&\text{\rm if $n\equiv 0\pmod 5$};  \\ 
  (-1)^{\left\lfloor n/5\right\rfloor},&\text{\rm if $n\equiv 1$ or $4\pmod 5$};  \\ 
  (-1)^{\left\lfloor n/5\right\rfloor}\alpha,&\text{\rm if $n\equiv 2$ or $3\pmod 5$}; \label{eq.efvrm4x} 
 \end{cases}\\
 &\frac{\sin(3n\pi/5)}{\sin(3\pi/5)} =  \begin{cases}
  0,&\text{\rm if $n\equiv 0\pmod 5$};  \\ 
  (-1)^{\left\lfloor n/5\right\rfloor},&\text{\rm  if $n\equiv 1$ or $4\pmod 5$};  \\ 
  (-1)^{\left\lfloor n/5\right\rfloor}\beta,&\text{\rm if $n\equiv 2$ or $3\pmod 5$}. \label{eq.efvrm4y}
 \end{cases}
\end{align}
\end{lemma}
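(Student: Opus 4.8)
The plan is to exploit the quasi‑periodicity of the sine function to collapse each quotient to the residue of $n$ modulo $5$, and then to read off a handful of special values using the preceding lemma. Write $n = 5q + r$ with $q = \left\lfloor n/5\right\rfloor$ and $r\in\{0,1,2,3,4\}$; this holds for negative $n$ as well as positive $n$ since $q$ is the floor quotient. Because $\sin(m\pi+\phi)=(-1)^m\sin\phi$ for every integer $m$, we obtain $\sin(n\pi/5)=\sin(q\pi+r\pi/5)=(-1)^q\sin(r\pi/5)$, and, using $(-1)^{3q}=(-1)^q$, also $\sin(3n\pi/5)=\sin(3q\pi+3r\pi/5)=(-1)^q\sin(3r\pi/5)$. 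Thus the quotients vanish when $r=0$, and in every other case the dependence on $q=\left\lfloor n/5\right\rfloor$ has been isolated as the factor $(-1)^{\left\lfloor n/5\right\rfloor}$.

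It then remains to evaluate the five residues for each quotient. For \eqref{eq.efvrm4x}: the identity $\sin(4\pi/5)=\sin(\pi/5)$ disposes of $r\in\{1,4\}$ and gives ratio $1$; while $\sin(2\pi/5)=\sin(3\pi/5)$ together with the double‑angle formula $\sin(2\pi/5)=2\sin(\pi/5)\cos(\pi/5)$ disposes of $r\in\{2,3\}$ and gives ratio $2\cos(\pi/5)=\alpha$, where $\cos(\pi/5)=\alpha/2$ is the special value already used in the proof of the preceding lemma. For \eqref{eq.efvrm4y}: reducing the arguments modulo $2\pi$ gives $\sin(6\pi/5)=\sin(9\pi/5)=-\sin(\pi/5)$ and $\sin(12\pi/5)=\sin(2\pi/5)=\sin(3\pi/5)$, so that $r\in\{1,4\}$ yields ratio $1$, while $r\in\{2,3\}$ yields ratio $-\sin(\pi/5)/\sin(3\pi/5)=-1/(2\cos(\pi/5))=-1/\alpha=\beta$. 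Reinserting the factor $(-1)^{\left\lfloor n/5\right\rfloor}$ in the two nonzero cases of each quotient reproduces exactly the displayed formulas.

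No step here is a genuine obstacle: the whole argument is elementary trigonometry. The only points needing a little care are the bookkeeping that $q=\left\lfloor n/5\right\rfloor$ is the correct quotient for all integers $n$, and the observation that the two nontrivial ratios telescope to $\alpha$ and $\beta$ precisely because $2\cos(\pi/5)=\alpha$ and $\beta=-1/\alpha$.
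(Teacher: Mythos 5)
Your proof is correct: the reduction $\sin(n\pi/5)=(-1)^{\lfloor n/5\rfloor}\sin(r\pi/5)$ (and likewise with $3n$, using $(-1)^{3q}=(-1)^q$) is valid for all integers $n$, and the five residue evaluations, including $\sin(2\pi/5)/\sin(\pi/5)=2\cos(\pi/5)=\alpha$ and $-\sin(\pi/5)/\sin(3\pi/5)=-1/\alpha=\beta$, all check out. The paper states this lemma without proof, treating it as elementary in the same way as the preceding cosine lemma; your argument is exactly the intended one, so there is nothing to add.
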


From Lemma \ref{lem3} we can deduce the following Lucas and Fibonacci binomial identities modulo 5.
\begin{theorem}\label{thmxxxyyy1}
If $n$ is a positive
integer and $t$ is any integer, then
\begin{align}
&\sum_{k = 0}^{\left\lfloor {n/2} \right\rfloor } {( - 1)^k \binom{n - k}{k} L_{n - 2k + t} }  =  \begin{cases}
( - 1)^{\left\lfloor {(n + 1)/5} \right\rfloor } L_t,&\text{\rm  if $n\equiv 0$ or $3\pmod 5$};  \label{Waring1} \\ 
 ( - 1)^{\left\lfloor {(n + 1)/5} \right\rfloor } L_{t + 1},&\text{\rm  if $n\equiv 1$ or $2\pmod 5$};\\  
  0, &\text{\rm  if $n\equiv 4\pmod 5$};
  \end{cases}\\
 &\sum_{k = 0}^{\left\lfloor {n/2} \right\rfloor } {( - 1)^k \binom{n - k}{k} F_{n - 2k + t} }  =  \begin{cases}
 ( - 1)^{\left\lfloor {(n + 1)/5} \right\rfloor } F_t,&\text{\rm  if $n\equiv 0$ or $3\pmod 5$}; \label{Waring2} \\ 
 ( - 1)^{\left\lfloor {n/5} \right\rfloor } F_{t + 1},&\text{\rm  if $n\equiv 1$ or $2\pmod 5$};\\
 0, &\text{\rm if $n\equiv 4\pmod 5$}.  
 \end{cases}
\end{align}
\end{theorem}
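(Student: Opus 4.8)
The plan is to rerun the argument behind Theorem~\ref{th1}, but starting from \eqref{eq.dbuskhn} rather than \eqref{eq.zbuts4b}. Replacing $n$ by $n+1$ in \eqref{eq.dbuskhn} (and writing $2^{m}\cos^{m}x=(2\cos x)^{m}$) turns it into the Chebyshev $U$-expansion
\[
\sum_{k=0}^{\lfloor n/2\rfloor}(-1)^k\binom{n-k}{k}(2\cos x)^{n-2k}=\frac{\sin\bigl((n+1)x\bigr)}{\sin x},
\]
valid for every integer $n\ge 0$. The two substitutions I would make are $x=\pi/5$ and $x=3\pi/5$: by the $n=1$ and $n=3$ cases of \eqref{eq.kxkaziy} we have $2\cos(\pi/5)=\alpha$ and $2\cos(3\pi/5)=\beta$, so
\[
\sum_{k=0}^{\lfloor n/2\rfloor}(-1)^k\binom{n-k}{k}\alpha^{\,n-2k}=\frac{\sin\bigl((n+1)\pi/5\bigr)}{\sin(\pi/5)},\qquad
\sum_{k=0}^{\lfloor n/2\rfloor}(-1)^k\binom{n-k}{k}\beta^{\,n-2k}=\frac{\sin\bigl(3(n+1)\pi/5\bigr)}{\sin(3\pi/5)}.
\]

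Next I would bring in the parameter $t$ by multiplying the first identity by $\alpha^t$ and the second by $\beta^t$, making the summands $\alpha^{\,n-2k+t}$ and $\beta^{\,n-2k+t}$. Adding the two and using $\alpha^r+\beta^r=L_r$ gives
\[
\sum_{k=0}^{\lfloor n/2\rfloor}(-1)^k\binom{n-k}{k}L_{n-2k+t}=\alpha^{t}\,\frac{\sin\bigl((n+1)\pi/5\bigr)}{\sin(\pi/5)}+\beta^{t}\,\frac{\sin\bigl(3(n+1)\pi/5\bigr)}{\sin(3\pi/5)},
\]
and subtracting, dividing by $\alpha-\beta=\sqrt5$, and using $\alpha^r-\beta^r=\sqrt5\,F_r$ gives the same identity with every $L$ replaced by $F$. (Alternatively, exactly as in the proof of Theorem~\ref{th1}, one can use the single substitution $x=\pi/5$, rewrite $2\alpha^{\,n-2k+t}$ by \eqref{eq.fieo8cp}, and split into rational and irrational parts; this works because each value of $\sin((n+1)\pi/5)/\sin(\pi/5)$ supplied by \eqref{eq.efvrm4x} is $0$, $\pm1$, or $\pm\alpha$, so $\alpha^t$ times it is again $0$ or $\pm\alpha^s$ for an integer $s$.)

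Finally I would evaluate the right-hand sides by applying Lemma~\ref{lem3} with $n$ replaced by $n+1$; this is where the modulo~$5$ trichotomy enters, and it is the only step with any real content. If $n\equiv 4\pmod 5$, then $n+1\equiv 0$ and both quotients vanish by the first branches of \eqref{eq.efvrm4x} and \eqref{eq.efvrm4y}, so both sums are $0$. If $n\equiv 0$ or $3\pmod 5$, both quotients equal $(-1)^{\lfloor(n+1)/5\rfloor}$, so the Lucas right-hand side is $(-1)^{\lfloor(n+1)/5\rfloor}(\alpha^t+\beta^t)=(-1)^{\lfloor(n+1)/5\rfloor}L_t$ and, after dividing by $\sqrt5$, the Fibonacci one is $(-1)^{\lfloor(n+1)/5\rfloor}F_t$. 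If $n\equiv 1$ or $2\pmod 5$, the two quotients are $(-1)^{\lfloor(n+1)/5\rfloor}\alpha$ and $(-1)^{\lfloor(n+1)/5\rfloor}\beta$, and using $\alpha\cdot\alpha^t=\alpha^{t+1}$ and $\beta\cdot\beta^t=\beta^{t+1}$ the right-hand sides become $(-1)^{\lfloor(n+1)/5\rfloor}L_{t+1}$ and $(-1)^{\lfloor(n+1)/5\rfloor}F_{t+1}$; since $\lfloor n/5\rfloor=\lfloor(n+1)/5\rfloor$ in this residue class, these agree with the exponents printed in the statement. The main obstacle, such as it is, is purely clerical: keeping straight which floor exponent goes with which branch, and checking that $(-1)^{\lfloor n/5\rfloor}$ and $(-1)^{\lfloor(n+1)/5\rfloor}$ coincide on every residue class for which the sums are not already zero.
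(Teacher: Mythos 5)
Your proposal is correct and follows essentially the same route as the paper: the paper's proof is precisely to set $x=\pi/5$ in \eqref{eq.dbuskhn}, apply \eqref{eq.efvrm4x} (i.e., Lemma~\ref{lem3} with $n$ shifted by one), and split into rational and irrational parts via \eqref{eq.fieo8cp} --- which is exactly your parenthetical variant, while your primary presentation merely replaces that last step by the equivalent device of also substituting $x=3\pi/5$ and combining the two conjugate identities through the Binet forms. Your closing observation that $\lfloor n/5\rfloor=\lfloor (n+1)/5\rfloor$ when $n\equiv 1$ or $2\pmod 5$ correctly reconciles the two floor exponents appearing in \eqref{Waring1} and \eqref{Waring2}.
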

\begin{proof}
Set $x=\pi/5$ in \eqref{eq.dbuskhn},  use \eqref{eq.efvrm4x}, \eqref{eq.fieo8cp} and simplify.
\end{proof}

A variant of the Lucas and Fibonacci sums with even subscripts is stated as the next corollary.
\begin{corollary}
If $n$ is a positive integer, then
\begin{align*}
&\sum_{k = 0}^{\left\lfloor {n/2} \right\rfloor } {( - 1)^{n-k} \binom{n - k}kL_{2k} }  =  \begin{cases}
( - 1)^{\left\lfloor {(n + 1)/5} \right\rfloor} L_n,&\text{\rm if $n\equiv 0$ or $3\pmod 5$};  \\ 
 ( - 1)^{\left\lfloor {(n + 1)/5} \right\rfloor + 1} L_{n - 1},&\text{\rm if $n\equiv 1$ or $2\pmod 5$};\\
  0, &\text{\rm if $n\equiv 4\pmod 5$}; 
  \end{cases}\\
 &\sum_{k = 0}^{\left\lfloor {n/2} \right\rfloor } {( - 1)^{n-k} \binom{n - k}kF_{2k} }  =  \begin{cases}
 ( - 1)^{\left\lfloor {(n + 1)/5} \right\rfloor} F_n,&\text{\rm if $n\equiv 0$ or $3\pmod 5$};  \\ 
 ( - 1)^{\left\lfloor {(n + 1)/5} \right\rfloor + 1} F_{n - 1},&\text{\rm if $n\equiv 1$ or $2\pmod 5$};\\
 0, &\text{\rm if $n\equiv 4\pmod 5$}. 
 \end{cases}
\end{align*}
\end{corollary}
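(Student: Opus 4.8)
The plan is to derive this corollary directly from Theorem~\ref{thmxxxyyy1} by a substitution that turns the ascending-index sum into one with even subscripts. First I would start from \eqref{Waring1} and \eqref{Waring2} and replace the summation index $k$ by $\lfloor n/2\rfloor - k$; this is the standard reflection trick for sums of the form $\sum (-1)^k \binom{n-k}{k} a_{n-2k}$. The reflected binomial coefficient $\binom{n-k}{k}$ does not behave simply under this substitution, however, so instead I would first specialize the free parameter. Taking $t = -n$ in Theorem~\ref{thmxxxyyy1} makes the summand subscript $n-2k+t = -2k$, and then using $F_{-2k} = -F_{2k}$ and $L_{-2k} = L_{2k}$ (from the negative-subscript formulas in the Preliminaries) converts the left-hand sides into $\pm\sum_{k}(-1)^k\binom{n-k}{k}F_{2k}$ and $\sum_k(-1)^k\binom{n-k}{k}L_{2k}$.

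Next I would reconcile the sign $(-1)^k$ appearing after this substitution with the $(-1)^{n-k}$ demanded in the corollary: since $(-1)^{n-k} = (-1)^n(-1)^{-k} = (-1)^n(-1)^k$, the two differ exactly by the global factor $(-1)^n$, which I would move to the right-hand side. On the right-hand side of Theorem~\ref{thmxxxyyy1} with $t=-n$ we get $L_{-n} = (-1)^n L_n$, $L_{-n+1} = (-1)^{n-1}L_{n-1}$, $F_{-n} = (-1)^{n-1}F_n$, and $F_{-n+1} = (-1)^{n}F_{n-1}$; combining these negative-subscript evaluations with the extra $(-1)^n$ factor should collapse everything to precisely the stated closed forms. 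The Fibonacci case needs slightly more care because \eqref{Waring2} has exponent $\lfloor n/5\rfloor$ rather than $\lfloor (n+1)/5\rfloor$ in the $n\equiv 1,2$ branch, but when $n\equiv 1$ or $2\pmod 5$ one checks that $\lfloor n/5\rfloor$ and $\lfloor (n+1)/5\rfloor$ agree, so after absorbing the sign from $F_{-n+1}=(-1)^n F_{n-1}$ the exponent $\lfloor(n+1)/5\rfloor+1$ in the corollary emerges correctly.

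The main obstacle I anticipate is purely bookkeeping: tracking the interaction of the three separate sign sources — the $(-1)^n$ from rewriting $(-1)^{n-k}$, the $(-1)^{n-1}$ or $(-1)^n$ from each negative-subscript Binet-type reduction, and the floor-function exponent $(-1)^{\lfloor(n+1)/5\rfloor}$ — and making sure the parity arithmetic is consistent across all three residue classes $n\equiv 0,3$; $n\equiv 1,2$; and $n\equiv 4\pmod 5$. The $n\equiv 4$ case is trivial since both sides vanish. Once the sign accounting is organized case by case, no genuine computation remains, so I would simply write: substitute $t=-n$ in Theorem~\ref{thmxxxyyy1}, apply $F_{-m}=(-1)^{m-1}F_m$ and $L_{-m}=(-1)^mL_m$, multiply through by $(-1)^n$, and simplify the floor exponents using $\lfloor n/5\rfloor=\lfloor(n+1)/5\rfloor$ for $n\equiv 1,2\pmod 5$.
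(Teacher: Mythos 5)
Your proposal is correct: setting $t=-n$ in Theorem~\ref{thmxxxyyy1}, using $L_{-2k}=L_{2k}$, $F_{-2k}=-F_{2k}$ and the negative-subscript formulas on the right-hand side, and absorbing the factor $(-1)^n$ from $(-1)^{n-k}=(-1)^n(-1)^k$ yields exactly the stated closed forms, and the sign bookkeeping (including $\lfloor n/5\rfloor=\lfloor(n+1)/5\rfloor$ for $n\equiv 1,2\pmod 5$) checks out in every residue class. This is precisely the specialization the paper intends, since the corollary is stated immediately after the theorem without separate proof; the initial remark about reflecting the index $k\mapsto\lfloor n/2\rfloor-k$ is unnecessary and can be dropped.
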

\begin{corollary}
If $n$ is a positive integer, then
\begin{align*}
&\sum_{k = 0}^{\left\lfloor {n/2} \right\rfloor } {( - 1)^k \binom{n - k}kF_{n - 2k + 1} }  =  \begin{cases}
 0,&\text{\rm if $n\equiv 4\pmod 5$}; \\ 
 ( - 1)^{\left\lfloor {(n + 1)/5} \right\rfloor },&\text{\rm otherwise}; 
 \end{cases}\\
&\sum_{k = 0}^{\left\lfloor {n/2} \right\rfloor } {( - 1)^k \binom{n - k}kF_{n - 2k - \delta} }  = 0,
\end{align*}
where
\begin{equation*}
\delta=\begin{cases}
 0,&\text{\rm if $n\equiv 0$ or $3\pmod 5$};  \\ 
 1,&\text{\rm if $n\equiv 1$ or $2\pmod 5$}. 
 \end{cases}
\end{equation*}
\end{corollary}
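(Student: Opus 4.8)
The plan is to read off both identities as specializations of the Fibonacci case \eqref{Waring2} of Theorem~\ref{thmxxxyyy1}, choosing the free parameter $t$ so that the right-hand side collapses to the asserted value.

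For the first sum I would put $t=1$ in \eqref{Waring2}. Then, according as $n\equiv 0,3$; $n\equiv 1,2$; or $n\equiv 4\pmod 5$, the right-hand side becomes $(-1)^{\lfloor (n+1)/5\rfloor}F_1$, $(-1)^{\lfloor n/5\rfloor}F_2$, or $0$. Since $F_1=F_2=1$, all that remains is to check that the two exponents $\lfloor n/5\rfloor$ and $\lfloor (n+1)/5\rfloor$ agree (hence give the same sign) when $n\equiv 1$ or $2\pmod 5$: writing $n=5m+1$ or $n=5m+2$, both floors equal $m$. Consequently the two non-vanishing branches merge into the single value $(-1)^{\lfloor (n+1)/5\rfloor}$, which is exactly the ``otherwise'' case of the statement, while $n\equiv 4\pmod 5$ gives $0$ directly. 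This index comparison is the only (very mild) point requiring care; everything else is substitution.

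For the second sum I would set $t=-\delta$ in \eqref{Waring2}. If $n\equiv 0$ or $3\pmod 5$ then $\delta=0$, so $t=0$ and the right-hand side is $(-1)^{\lfloor (n+1)/5\rfloor}F_0=0$; if $n\equiv 1$ or $2\pmod 5$ then $\delta=1$, so $t=-1$ and the right-hand side is $(-1)^{\lfloor n/5\rfloor}F_{t+1}=(-1)^{\lfloor n/5\rfloor}F_0=0$. In both situations the sum vanishes solely because $F_0=0$. Finally, if $n\equiv 4\pmod 5$ the sum is $0$ by \eqref{Waring2} whatever value one assigns to $\delta$, so the fact that $\delta$ is only specified for the residues $0,1,2,3$ entails no loss; one simply notes this residue class separately. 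I do not anticipate any genuine obstacle here, since the corollary is a routine reduction of the theorem via the identities $F_0=0$ and $F_1=F_2=1$.
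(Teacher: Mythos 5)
Your proposal is correct and matches the paper's (implicit) derivation: the corollary is simply the specialization of \eqref{Waring2} at $t=1$ and $t=-\delta$, using $F_1=F_2=1$ and $F_0=0$, together with the observation that $\lfloor n/5\rfloor=\lfloor (n+1)/5\rfloor$ when $n\equiv 1$ or $2\pmod 5$. Your handling of the residue class $n\equiv 4\pmod 5$ in the second identity (where $\delta$ is left unspecified but the sum vanishes regardless) is also the right reading of the statement.
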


\section{Fibonacci sums modulo 5 from Waring formulas}

This section is based on utilizing the following trigonometric identities with the use of Waring formulas.
\begin{lemma}\label{lem.pci5uw2}
If $n$ is a positive integer, then
\begin{align}
&\sum_{k = 0}^{\left\lfloor {n/2} \right\rfloor } {(- 1)^k\frac{n}{{n - k}}\binom{n - k}{k} 2^{n - 2k - 1} \cos^{n - 2k} x} = \cos  nx,  \label{eq.q5yiyk7}\\
&\sum_{k = 0}^{(n - 1)/2} {(- 1)^{(n - 1)/2- k} \frac{ n}{{n - k}}\binom{n - k}{k} 2^{n - 2k - 1} \sin^{n - 2k} x} = \sin  nx,\quad\text{\rm $n$ odd},\label{eq.q56i40r}\\
&\sum_{k = 0}^{n/2} {(- 1)^{n/2-k} \frac{n}{{n - k}}\binom{n - k}{k} 2^{n - 2k - 1} \sin^{n - 2k} x} = \cos  nx,\quad\text{\rm  $n$ even}\label{eq,nvk6rjz}.
\end{align}
\end{lemma}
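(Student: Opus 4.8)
The plan is to obtain all three formulas from identity \eqref{eq.zbuts4b} of the first lemma, so that no new machinery is needed. First I would solve \eqref{eq.zbuts4b} for $\cos nx$, which gives
\[
\cos nx = 2^{n-1}\cos^n x + \sum_{k=1}^{\lfloor n/2\rfloor} \frac{(-1)^{k}\,n}{k}\binom{n-k-1}{k-1} 2^{n-2k-1}\cos^{n-2k}x ,
\]
using $-(-1)^{k-1}=(-1)^k$. The next step is the elementary binomial identity
\[
\frac{n}{k}\binom{n-k-1}{k-1} = \frac{n}{n-k}\binom{n-k}{k}, \qquad 1\le k\le \lfloor n/2\rfloor ,
\]
verified by writing both sides as $\frac{n\,(n-k-1)!}{k!\,(n-2k)!}$. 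Since the $k=0$ term of $\sum_{k=0}^{\lfloor n/2\rfloor}(-1)^k\frac{n}{n-k}\binom{n-k}{k}2^{n-2k-1}\cos^{n-2k}x$ is exactly $2^{n-1}\cos^n x$, substituting this identity into the displayed expression for $\cos nx$ reproduces precisely the left-hand side of \eqref{eq.q5yiyk7}. (Equivalently, \eqref{eq.q5yiyk7} is the Waring formula for $\alpha^n+\beta^n$ in terms of $\alpha+\beta$ and $\alpha\beta$, specialized to $\alpha=e^{ix}$, $\beta=e^{-ix}$ and divided by $2$; I would mention this as an alternative route.)

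For \eqref{eq.q56i40r} and \eqref{eq,nvk6rjz} I would simply replace $x$ by $\pi/2-x$ in \eqref{eq.q5yiyk7}. On the left, $\cos(\pi/2-x)=\sin x$, so each summand becomes $(-1)^k\frac{n}{n-k}\binom{n-k}{k}2^{n-2k-1}\sin^{n-2k}x$; on the right, $\cos\!\big(n(\pi/2-x)\big)=\cos\tfrac{n\pi}{2}\cos nx+\sin\tfrac{n\pi}{2}\sin nx$. If $n$ is odd this equals $(-1)^{(n-1)/2}\sin nx$, and $\lfloor n/2\rfloor=(n-1)/2$; multiplying the identity through by $(-1)^{(n-1)/2}$ and using $(-1)^{(n-1)/2}(-1)^k=(-1)^{(n-1)/2-k}$ gives \eqref{eq.q56i40r}. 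If $n$ is even the right-hand side equals $(-1)^{n/2}\cos nx$, and $\lfloor n/2\rfloor=n/2$; the same maneuver with $(-1)^{n/2}$ yields \eqref{eq,nvk6rjz}.

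I do not expect a genuine obstacle here; the one place that demands care is the bookkeeping in the substitution step — correctly recording the parity-dependent values of $\cos(n\pi/2)$ and $\sin(n\pi/2)$, rewriting $(-1)^{(n-1)/2+k}$ as $(-1)^{(n-1)/2-k}$ (and the analogous even-case rewrite), and pinning down the upper summation limit in each parity class. Once those are handled, each of the three statements follows by a single line of rearrangement.
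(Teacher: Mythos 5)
Your proof is correct, but it takes a genuinely different route from the paper. The paper proves all three identities in one stroke from the Waring formula
$\sum_{k=0}^{\lfloor n/2\rfloor}(-1)^k\frac{n}{n-k}\binom{n-k}{k}(x_1+x_2)^{n-2k}(x_1x_2)^k=x_1^n+x_2^n$,
choosing $x_1=e^{ix}/2$, $x_2=e^{-ix}/2$ for \eqref{eq.q5yiyk7} and $x_1=e^{ix}/(2i)$, $x_2=-e^{-ix}/(2i)$ for \eqref{eq.q56i40r} and \eqref{eq,nvk6rjz} (you flag this only as an aside for the cosine case). You instead recycle the already-quoted Gradshteyn--Ryzhik expansion \eqref{eq.zbuts4b}: solving it for $\cos nx$ and absorbing the leading term $2^{n-1}\cos^nx$ as the $k=0$ summand via the (correct) identity $\frac{n}{k}\binom{n-k-1}{k-1}=\frac{n}{n-k}\binom{n-k}{k}$ gives \eqref{eq.q5yiyk7}, and the cofunction substitution $x\mapsto\pi/2-x$ together with the parity-dependent values of $\cos(n\pi/2)$ and $\sin(n\pi/2)$ delivers the other two; I checked the sign bookkeeping ($(-1)^{(n-1)/2+k}=(-1)^{(n-1)/2-k}$, and likewise in the even case) and the upper limits, and everything matches. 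Your route has the merit of needing no new machinery beyond Lemma~1 and elementary trigonometry; the paper's route has the merit of introducing the Waring formula, which it immediately reuses (in dual form) for the next lemma, so the complex-exponential setup is not wasted there. Both are complete and valid.
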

\begin{proof}
Consider the Waring formula
\begin{equation*}
\sum_{k = 0}^{\left\lfloor {n/2} \right\rfloor } {( - 1)^k\frac{{ n}}{{n - k}}\binom{n - k}k(x_1  + x_2 )^{n - 2k} (x_1 x_2 )^k }  = x_1^n  + x_2^n.
\end{equation*}
Let $i$ be the imaginary unit. The choice
$x_1  = e^{ix}/2$, $x_2  = e^{ - ix}/2$ produces \eqref{eq.q5yiyk7}, while the choice
$x_1  = e^{ix}/(2i) $, $x_2  = -e^{ - ix}/{(2i)}$ gives
$x_1  + x_2  = \sin x$, $x_1 x_2  = 1/4$, and
\begin{equation*}
x_1^n  + x_2^n  =  \begin{cases}
 ( - 1)^{(n - 1)/2} 2^{1 - n} \sin nx ,&\text{if $n$ is odd};\\ 
 ( - 1)^{n/2} 2^{1 - n} \cos nx,&\text{if $n$ is even}; 
 \end{cases} 
\end{equation*}
and hence \eqref{eq.q56i40r} and \eqref{eq,nvk6rjz}.
\end{proof}
\begin{lemma}
If $n$ is a positive integer, then
\begin{align}
&\sum_{k = 0}^{\left\lfloor {n/2} \right\rfloor } {(- 1)^k \binom{n - k}{k} 2^{n - 2k} \cos^{n - 2k} x} = \frac{{\sin ((n + 1)x)}}{{\sin x}},
\label{eq.yyyaaa}\\
&\sum_{k = 0}^{(n - 1)/2} {(- 1)^{(n - 1)/2-k} \binom{n - k}{k} 2^{n - 2k} \sin^{n - 2k} x} = \frac{{\sin ((n + 1)x)}}{{\cos x}},\quad\text{\rm  $n$ odd},\nonumber\\
&\sum_{k = 0}^{n/2} {(- 1)^{n/2-k} \binom{n - k}{k} 2^{n - 2k} \sin^{n - 2k} x} = \frac{{\cos((n + 1)x)}}{{\cos x}},\quad\text{\rm $n$ even}.\nonumber
\end{align}
\end{lemma}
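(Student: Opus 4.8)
The plan is to derive all three identities from equation \eqref{eq.dbuskhn}, which is already available. First I would observe that the first identity is simply \eqref{eq.dbuskhn} with $n$ replaced by $n+1$: the upper limit becomes $\lfloor((n+1)-1)/2\rfloor=\lfloor n/2\rfloor$, the summand becomes $(-1)^k\binom{n-k}{k}2^{n-2k}\cos^{n-2k}x$, and the right-hand side becomes $\sin((n+1)x)/\sin x$. Since \eqref{eq.dbuskhn} holds for every positive integer, this substitution is legitimate and yields \eqref{eq.yyyaaa} immediately.

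For the remaining two identities I would apply the reflection $x\mapsto\pi/2-x$ to \eqref{eq.yyyaaa}. On the left-hand side $\cos(\pi/2-x)=\sin x$, so the sum becomes $\sum_{k=0}^{\lfloor n/2\rfloor}(-1)^k\binom{n-k}{k}2^{n-2k}\sin^{n-2k}x$. On the right-hand side the denominator becomes $\cos x$, while the numerator becomes
\[
\sin\!\big((n+1)(\pi/2-x)\big)=\sin\!\Big(\tfrac{(n+1)\pi}{2}\Big)\cos((n+1)x)-\cos\!\Big(\tfrac{(n+1)\pi}{2}\Big)\sin((n+1)x).
\]
The key computation is to evaluate $\sin\big((n+1)\pi/2\big)$ and $\cos\big((n+1)\pi/2\big)$ according to the parity of $n$: when $n$ is odd, $(n+1)/2$ is an integer, so $\sin\big((n+1)\pi/2\big)=0$ and $\cos\big((n+1)\pi/2\big)=(-1)^{(n+1)/2}$; when $n$ is even, $\cos\big((n+1)\pi/2\big)=0$ and $\sin\big((n+1)\pi/2\big)=(-1)^{n/2}$.

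Substituting these values gives, for $n$ odd, $\sum_{k}(-1)^k\binom{n-k}{k}2^{n-2k}\sin^{n-2k}x=(-1)^{(n-1)/2}\sin((n+1)x)/\cos x$ (using $-(-1)^{(n+1)/2}=(-1)^{(n-1)/2}$), and for $n$ even, $\sum_{k}(-1)^k\binom{n-k}{k}2^{n-2k}\sin^{n-2k}x=(-1)^{n/2}\cos((n+1)x)/\cos x$. Multiplying through by the global sign $(-1)^{(n-1)/2}$ (respectively $(-1)^{n/2}$) and absorbing it into the summand via $(-1)^{(n-1)/2}(-1)^k=(-1)^{(n-1)/2-k}$ (respectively $(-1)^{n/2}(-1)^k=(-1)^{n/2-k}$) produces exactly the second and third identities. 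Alternatively, one could derive all three directly from the Waring-type formula $\sum_{k=0}^{\lfloor n/2\rfloor}(-1)^k\binom{n-k}{k}(x_1+x_2)^{n-2k}(x_1x_2)^k=(x_1^{n+1}-x_2^{n+1})/(x_1-x_2)$ using the substitutions $(x_1,x_2)=(e^{ix},e^{-ix})$ and $(x_1,x_2)=(-ie^{ix},ie^{-ix})$, in the spirit of the proof of Lemma~\ref{lem.pci5uw2}.

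I expect the only delicate point to be the sign bookkeeping in this quarter-period shift of the sine — tracking which of $\sin$ or $\cos$ survives, and with which sign, across the two parity classes of $n$ — but there is no substantive obstacle: each side is, up to removable singularities, a polynomial in $\sin x$ or $\cos x$, so the reflected identity is valid wherever defined and extends by continuity.
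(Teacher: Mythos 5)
Your proof is correct, and its primary route differs from the paper's. The paper proves all three identities in one stroke from the dual Waring formula $\sum_{k=0}^{\lfloor n/2\rfloor}(-1)^k\binom{n-k}{k}(x_1+x_2)^{n-2k}(x_1x_2)^k=(x_1^{n+1}-x_2^{n+1})/(x_1-x_2)$ with the exponential substitutions, exactly as in the proof of Lemma \ref{lem.pci5uw2}; that is the "alternative" you mention in passing. Your main argument instead notices that \eqref{eq.yyyaaa} is literally \eqref{eq.dbuskhn} with $n\mapsto n+1$ (the upper limit, binomial coefficient, powers and right-hand side all match), and then obtains the two sine-power identities from \eqref{eq.yyyaaa} by the reflection $x\mapsto\pi/2-x$, splitting $\sin\bigl((n+1)(\pi/2-x)\bigr)$ by parity of $n$. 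I checked the sign bookkeeping: for $n$ odd the surviving factor is $-\cos\bigl((n+1)\pi/2\bigr)=(-1)^{(n-1)/2}$ and for $n$ even it is $\sin\bigl((n+1)\pi/2\bigr)=(-1)^{n/2}$, and absorbing these global signs into the summand via $(-1)^{m}(-1)^k=(-1)^{m-k}$ gives precisely the stated forms, with the correct upper limits $(n-1)/2$ and $n/2$. What your route buys is economy — it reuses the already-quoted Gradshteyn--Ryzhik identity and needs no new algebraic input — whereas the paper's Waring-formula route is more uniform with the preceding lemma and generalizes more readily to other substitutions. Either proof is acceptable.
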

\begin{proof}
Similar to the proof of Lemma \ref{lem.pci5uw2}. We use the dual to the Waring formula
\begin{equation*}
\sum_{k = 0}^{\left\lfloor {n/2} \right\rfloor } {( - 1)^k \binom {n - k}k(x_1  + x_2 )^{n - 2k} (x_1 x_2 )^k } 
= \frac{{x_1^{n + 1} - x_2^{n + 1} }}{{x_1 - x_2 }}.
\end{equation*}
\end{proof}
\begin{theorem} \label{th7}
If $n$ is a positive integer and $t$ is any integer, then
\begin{align*}
&\sum_{k = 0}^{\left\lfloor {n/2} \right\rfloor } (-1)^{n-k} \frac{n}{n - k} \binom{n - k}{k} F_{n-2k+t} =  
\begin{cases}
  2  F_t, & \text{\rm  if $n \equiv 0$}\pmod 5;\\ 
 -F_{t+1}, & \text{\rm if $n \equiv 1$ or $4$}\pmod 5; \\ 
 F_{t-1}, & \text{\rm if $n \equiv 2$ or $3$}\pmod 5; 
\end{cases} 
\\
&\sum_{k = 0}^{\left\lfloor {n/2} \right\rfloor } (-1)^{n-k} \frac{n}{n - k} \binom{n - k}{k} L_{n-2k+t} =  
\begin{cases}
  2 L_t, & \text{\rm if $n \equiv 0$}\pmod 5;\\ 
 -L_{t+1}, & \text{\rm if $n \equiv 1$ or $4$}\pmod 5; \\ 
 L_{t-1}, & \text{\rm if $n \equiv 2$ or $3$}\pmod 5. 
\end{cases} 
\end{align*}
\end{theorem}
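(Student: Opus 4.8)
The plan is to follow the route used for Theorem~\ref{th1}, but now starting from the Waring identity \eqref{eq.q5yiyk7} of Lemma~\ref{lem.pci5uw2} in place of \eqref{eq.zbuts4b}. The point is that, apart from the harmless factor $2^{n-2k-1}$, the left-hand side of \eqref{eq.q5yiyk7} is precisely the binomial sum occurring in Theorem~\ref{th7} (with $(-1)^k$ instead of $(-1)^{n-k}$), so one suitably chosen argument should determine everything.

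First I would set $x=\pi/5$ in \eqref{eq.q5yiyk7}. Since $2\cos(\pi/5)=\alpha$, we have $2^{n-2k-1}\cos^{n-2k}(\pi/5)=\tfrac12\alpha^{n-2k}$, so \eqref{eq.q5yiyk7} reduces to
\[
\sum_{k=0}^{\lfloor n/2\rfloor}(-1)^k\frac{n}{n-k}\binom{n-k}{k}\alpha^{n-2k}=2\cos\!\Big(\frac{n\pi}{5}\Big).
\]
Multiplying through by $2\alpha^t$ and using \eqref{eq.fieo8cp} in the shape $2\alpha^{n-2k+t}=L_{n-2k+t}+F_{n-2k+t}\sqrt5$, the left-hand side becomes
\[
\sum_{k=0}^{\lfloor n/2\rfloor}(-1)^k\frac{n}{n-k}\binom{n-k}{k}\bigl(L_{n-2k+t}+F_{n-2k+t}\sqrt5\bigr),
\]
while the right-hand side becomes $4\alpha^t\cos(n\pi/5)$. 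Multiplying both sides by $(-1)^n$ finally replaces $(-1)^k$ by $(-1)^{n-k}$ in the sum.

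It remains to evaluate $(-1)^n\,4\alpha^t\cos(n\pi/5)$ and to write it as $A+B\sqrt5$ with $A,B\in\mathbb Z$. By \eqref{eq.kxkaziy}: if $n\equiv0\pmod5$ it equals $4\alpha^t=2(L_t+F_t\sqrt5)$; if $n\equiv1$ or $4\pmod5$ it equals $(-1)^n\cdot2(-1)^{n-1}\alpha^{t+1}=-2\alpha^{t+1}=-(L_{t+1}+F_{t+1}\sqrt5)$; and if $n\equiv2$ or $3\pmod5$ it equals $(-1)^n\cdot2(-1)^{n-1}\alpha^{t}\beta=-2\alpha^t\beta=2\alpha^{t-1}=L_{t-1}+F_{t-1}\sqrt5$, where in the last step I used $\alpha\beta=-1$. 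Since $F_m,L_m\in\mathbb Z$ while $\tfrac{n}{n-k}\binom{n-k}{k}\in\mathbb Q$, both binomial sums in Theorem~\ref{th7} are rational; comparing the rational part and the coefficient of $\sqrt5$ on the two sides (legitimate because $\sqrt5\notin\mathbb Q$) extracts the Lucas identity and the Fibonacci identity at one stroke.

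The one delicate spot is the case analysis at the end: keeping straight the sign $(-1)^{n-1}$ coming from \eqref{eq.kxkaziy}, and noticing that it is exactly $\alpha\beta=-1$ (i.e. $\alpha^t\beta=-\alpha^{t-1}$) that lowers the index to $t-1$ in the classes $n\equiv2,3\pmod5$. Everything else is the same substitute-and-simplify pattern already carried out for Theorem~\ref{th1}.
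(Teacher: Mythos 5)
Your proof is correct. You use the same key identity as the paper, namely \eqref{eq.q5yiyk7}, but you extract the two conclusions differently. The paper evaluates \eqref{eq.q5yiyk7} at \emph{two} points, $x=\pi/5$ and $x=3\pi/5$, obtaining one identity for powers of $\alpha$ and (after invoking $\cos 3x=4\cos^3x-3\cos x$ to handle the second evaluation) a companion identity for powers of $\beta$; it then adds and subtracts these via the Binet forms to separate the Lucas and Fibonacci sums. You instead evaluate only at $x=\pi/5$ and split the single resulting identity in $\mathbb{Q}(\sqrt5)$ into its rational part and its $\sqrt5$-coefficient, which is legitimate exactly as you say because both binomial sums are rational and $\sqrt5$ is irrational. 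This is the same mechanism the paper itself uses for Theorem~\ref{th1} (``use \eqref{eq.kxkaziy} and \eqref{eq.fieo8cp}''), so your route is arguably the more economical one here: it needs only one evaluation point and avoids the $\cos 3x$ manipulation entirely. What the paper's two-point version buys is an explicit, independently verifiable $\beta$-identity (useful as a consistency check and in settings where the conjugate relation is not as transparent), but for the statement at hand both arguments are complete; your case analysis of $(-1)^n\,4\alpha^t\cos(n\pi/5)$, including the sign bookkeeping via $\alpha\beta=-1$ in the classes $n\equiv2,3\pmod5$, checks out against all three branches of the theorem.
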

\begin{proof}
We apply equation \eqref{eq.q5yiyk7}. Inserting $x=\pi/5$ and $x=3\pi/5$, respectively,
and keeping in mind the trigonometric identity
$\cos 3x = 4\cos^3 x - 3\cos x$
we end with
\begin{equation*}
\sum_{k = 0}^{\left\lfloor {n/2} \right\rfloor } (-1)^{n-k} \frac{n}{n - k} \binom{n - k}{k} \alpha^{n-2k+t} =  
\begin{cases}
  2\alpha^t, & \text{if $n \equiv 0$}\pmod 5;\\ 
 -\alpha^{t+1}, & \text{\rm if $n \equiv 1$ or $4$}\pmod 5; \\ 
 \alpha^{t-1}, & \text{\rm if $n \equiv 2$ or $3$}\pmod 5; 
\end{cases} 
\end{equation*}
and
\begin{equation*}
\sum_{k = 0}^{\left\lfloor {n/2} \right\rfloor } (-1)^{n-k} \frac{n}{n - k} \binom{n - k}{k} \beta^{n-2k+t} =  
\begin{cases}
  2 \beta^t, & \text{\rm if $n \equiv 0$}\pmod 5;\\ 
 -\beta^t (\alpha^3-3\alpha), & \text{\rm if $n \equiv 1$ or $4$}\pmod 5; \\ 
 -\beta^t (\beta^3-3\beta), & \text{\rm if $n \equiv 2$ or $3$}\pmod 5. 
\end{cases} 
\end{equation*}
To complete the proof simplify the terms in brackets and combine according the Binet form.
\end{proof}

From Theorem \ref{th7} we can immediately obtain the following finite binomial sums.
\begin{corollary}
If $n$ is a positive integer, then
\begin{align*}
&\sum_{k = 0}^{\left\lfloor {n/2} \right\rfloor } (-1)^{n-k} \frac{n}{n - k} \binom{n - k}{k} F_{n-2k} =  
\begin{cases}
  0, & \text{\rm if $n \equiv 0$}\pmod 5;\\ 
 -1, & \text{\rm if $n \equiv 1$ or $4$}\pmod 5; \\ 
 1, & \text{\rm if $n \equiv 2$ or $3$}\pmod 5; 
\end{cases} \\
&\sum_{k = 0}^{\left\lfloor {n/2} \right\rfloor } (-1)^{n-k} \frac{n}{n - k} \binom{n - k}{k} L_{n-2k} =  
\begin{cases}
  4, & \text{\rm if $n \equiv 0$}\pmod 5;\\ 
 -1, & \text{\rm otherwise}; 
\end{cases} 
\end{align*}
and
\begin{align*}
&\sum_{k = 0}^{\left\lfloor {n/2} \right\rfloor } (-1)^{n-k} \frac{n}{n - k} \binom{n - k}{k} F_{n+1-2k} =  
\begin{cases}
  2, & \text{\rm if $n \equiv 0$}\pmod 5;\\ 
 -1, & \text{\rm if $n \equiv 1$ or $4$}\pmod 5; \\ 
 0, & \text{\rm if $n \equiv 2$ or $3$}\pmod 5; 
\end{cases}\\ 
&\sum_{k = 0}^{\left\lfloor {n/2} \right\rfloor } (-1)^{n-k} \frac{n}{n - k} \binom{n - k}{k} L_{n+1-2k} =  
\begin{cases}
 2, & \text{\rm if $n \equiv 0, 2$ or $3$}\pmod 5;\\ 
 -3, & \text{\rm  otherwise}. 
\end{cases} 
\end{align*}
\end{corollary}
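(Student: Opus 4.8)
The plan is to obtain all four identities as direct specializations of Theorem~\ref{th7}, choosing $t=0$ for the first pair (the sums with $F_{n-2k}$ and $L_{n-2k}$) and $t=1$ for the second pair (the sums with $F_{n+1-2k}$ and $L_{n+1-2k}$), and then to read off the resulting right-hand sides using the small-index values of the two sequences, including the negative-subscript rule $F_{-n}=(-1)^{n-1}F_n$, $L_{-n}=(-1)^n L_n$ recorded in the Preliminaries.

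First I would put $t=0$ in the Fibonacci identity of Theorem~\ref{th7}. The three branches become $2F_0=0$, $-F_1=-1$, and $F_{-1}=1$ (since $F_{-1}=(-1)^{0}F_1=1$), which is exactly the first displayed formula. For the Lucas identity with $t=0$ the branches are $2L_0=4$, $-L_1=-1$, and $L_{-1}=-1$ (since $L_{-1}=(-1)^{1}L_1=-1$); because the values for $n\equiv 1,4\pmod 5$ and $n\equiv 2,3\pmod 5$ coincide, the three cases collapse to ``$4$ if $n\equiv 0\pmod 5$, and $-1$ otherwise,'' giving the second formula.

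Next I would set $t=1$. For the Fibonacci sum the branches are $2F_1=2$, $-F_2=-1$, and $F_0=0$, which is the third formula. For the Lucas sum they are $2L_1=2$, $-L_2=-3$, and $L_0=2$; here the branches for $n\equiv 0\pmod 5$ and $n\equiv 2,3\pmod 5$ agree, so the cases merge to ``$2$ if $n\equiv 0,2,3\pmod 5$, and $-3$ otherwise,'' which is the fourth formula.

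There is no real obstacle: the argument is pure substitution, and the only points requiring a little care are evaluating the Fibonacci and Lucas numbers at the negative subscript $-1$ and keeping track of which residue classes modulo $5$ produce equal right-hand sides so that the case statements can be consolidated in the form stated.
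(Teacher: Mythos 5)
Your proposal is correct and is exactly the paper's (implicit) argument: the corollary is obtained from Theorem~\ref{th7} by setting $t=0$ and $t=1$ and evaluating $2F_0=0$, $-F_1=-1$, $F_{-1}=1$, $2L_0=4$, $-L_1=L_{-1}=-1$, $2F_1=2$, $-F_2=-1$, $F_0=0$, $2L_1=L_0=2$, $-L_2=-3$, then merging the coinciding residue classes. Nothing further is needed.
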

\begin{remark}
Identities \eqref{Waring1} and \eqref{Waring2} in Theorem \ref{thmxxxyyy1} can also be obtained straightforwardly 
by evaluating the trigonometric identity \eqref{eq.yyyaaa} at $x=\pi/5$ and $x=3\pi/5$, respectively, 
while using \eqref{eq.efvrm4x} and \eqref{eq.efvrm4y}. 
\end{remark}

\section{Fibonacci sums modulo 5 from Chebyshev polynomials}

For any integer $n\geq0$, the Chebyshev polynomials $\{T_n(x)\}_{n\geq0}$ of the first kind are defined 
by the second-order recurrence relation \cite{Mason}
\begin{equation*}\label{T-def}
T_{n+1}(x) =2x T_n(x) - T_{n-1}(x),\quad n\geq2,\quad T_0(x)=1,\,\,\, T_1(x)=x, 
\end{equation*}
while the Chebyshev polynomials $\{U_n(x)\}_{n\geq0}$ of the second kind are defined by
\begin{equation*}\label{U-def}
U_{n+1}(x) = 2xU_n(x) - U_{n-1}(x),\quad n\geq2, \quad U_0(x)=1,\,\,\, U_1(x)=2x.
\end{equation*}
The Chebyshev polynomials possess the representations    
\begin{align*}
&T_n(x) = \sum_{k=0}^{\lfloor{n}/{2}\rfloor}{n\choose 2k}(x^2-1)^kx^{n-2k},\\
&U_n(x) = \sum_{k=0}^{\lfloor{n}/{2}\rfloor}{n+1\choose 2k+1}(x^2-1)^kx^{n-2k},
\end{align*}
and have the exact Binet-like formulas
\begin{align*}
&T_n (x) = \frac{1}{2} \left( (x + \sqrt {x^2 - 1} )^n + (x - \sqrt {x^2 - 1} )^n \right),  
\\
&U_n (x) = \frac{1}{2\sqrt {x^2  - 1}} \left( (x + \sqrt {x^2 - 1} )^{n + 1} - (x - \sqrt {x^2 - 1} )^{n + 1} \right).
\end{align*}

The properties of Chebyshev polynomials of the first and second kinds have been studied extensively in the literature. 
The reader can find in the recent papers \cite{Fan,Frontczak,Kilic2,Li-Wenpeng,Li,Zhang} additional information about them, especially about their products, convolutions, power sums as well as their
connections to Fibonacci numbers and polynomials. 
\begin{lemma}
For all $x\in\mathbb{C}$ and a positive integer $n$, we have the following identities:
\begin{align}
&n\sum_{k=0}^n (-1)^k \frac {4^k}{n + k}\binom{n + k}{n - k} \sin^{2k}\!\Big(\frac{x}{2}\Big) = \cos nx,\label{eq.n1t72km}\\
&n\sum_{k=0}^n (-1)^{n-k} \frac {4^k}{n + k}\binom{n + k}{n - k} \cos^{2k}\!\Big(\frac{x}{2}\Big) = \cos nx \label{eq.h0uilmu}.
\end{align}
\end{lemma}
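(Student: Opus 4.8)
The plan is to obtain both identities directly from the Waring formula recalled in the proof of Lemma~\ref{lem.pci5uw2},
\begin{equation*}
\sum_{k=0}^{\lfloor m/2\rfloor}(-1)^k\frac{m}{m-k}\binom{m-k}{k}(x_1+x_2)^{m-2k}(x_1x_2)^k=x_1^m+x_2^m,
\end{equation*}
applied with $m=2n$, so that the upper summation index $\lfloor 2n/2\rfloor$ equals exactly $n$, and with a half-angle choice of $x_1$ and $x_2$. Since the Waring formula is a polynomial identity in $x_1,x_2$, the resulting identities hold for every $x\in\mathbb{C}$, with no appeal to analytic continuation.

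First I would prove \eqref{eq.n1t72km} by setting $x_1=e^{ix/2}$ and $x_2=-e^{-ix/2}$, so that $x_1+x_2=2i\sin(x/2)$, $x_1x_2=-1$, and $x_1^{2n}+x_2^{2n}=e^{inx}+e^{-inx}=2\cos nx$. Substituting these, together with the elementary evaluation $(2i)^{2n-2k}=(-1)^{n-k}4^{n-k}$, turns the Waring identity into
\begin{equation*}
2\cos nx=(-1)^n\sum_{k=0}^{n}(-1)^k\frac{2n}{2n-k}\binom{2n-k}{k}4^{\,n-k}\sin^{2n-2k}\!\Big(\frac{x}{2}\Big).
\end{equation*}
Reindexing by $j=n-k$ collapses the right-hand side to $\sum_{j=0}^{n}(-1)^j\frac{2n}{n+j}\binom{n+j}{n-j}4^{\,j}\sin^{2j}(x/2)$, and dividing by $2$ gives \eqref{eq.n1t72km} verbatim.

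For \eqref{eq.h0uilmu} there are two equally short routes. One can rerun the same computation with $x_1=e^{ix/2}$, $x_2=e^{-ix/2}$, so that $x_1+x_2=2\cos(x/2)$, $x_1x_2=1$, and $x_1^{2n}+x_2^{2n}=2\cos nx$; the substitution $j=n-k$ now produces $2\cos nx=\sum_{j=0}^{n}(-1)^{n-j}\frac{2n}{n+j}\binom{n+j}{n-j}4^{\,j}\cos^{2j}(x/2)$, which is \eqref{eq.h0uilmu}. Alternatively one can simply replace $x$ by $\pi-x$ in \eqref{eq.n1t72km}: since $\sin^2((\pi-x)/2)=\cos^2(x/2)$ and $\cos(n(\pi-x))=(-1)^n\cos nx$, multiplying through by $(-1)^n$ and using $(-1)^{n+k}=(-1)^{n-k}$ gives \eqref{eq.h0uilmu} at once. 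I do not anticipate a real obstacle; the only point requiring attention is the sign bookkeeping in the reindexing $j=n-k$, together with the identities $(2i)^{2n-2k}=(-1)^{n-k}4^{n-k}$ and $\binom{n+k}{n-k}=\binom{n+k}{2k}$.
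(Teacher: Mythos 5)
Your proof is correct, and it takes a different (and more self-contained) route than the paper. The paper disposes of this lemma in one line by invoking the Chebyshev identity \eqref{eq.krxoelz} from \cite{Adegoke4}: substituting $x=\cos\theta$ there and using $T_n(\cos\theta)=\cos n\theta$ together with $1-\cos\theta=2\sin^2(\theta/2)$ and $1+\cos\theta=2\cos^2(\theta/2)$ yields \eqref{eq.n1t72km} and \eqref{eq.h0uilmu} immediately; the combinatorial content is thus outsourced to an external preprint. You instead derive both identities directly from the Waring formula already quoted in the proof of Lemma \ref{lem.pci5uw2}, applied with degree $m=2n$ and the half-angle factorizations $x_1=e^{ix/2}$, $x_2=\mp e^{-ix/2}$; the reindexing $j=n-k$ and the sign computations ($(2i)^{2(n-k)}=(-1)^{n-k}4^{n-k}$, $(-1)^n(-1)^k=(-1)^{n-k}$) all check out, as does the alternative derivation of \eqref{eq.h0uilmu} from \eqref{eq.n1t72km} via $x\mapsto\pi-x$. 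What your argument buys is independence from \cite{Adegoke4} and a uniform mechanism (the same Waring formula, at even degree with half-angles) that parallels the paper's own Section 3; what the paper's route buys is brevity and the explicit link to $T_n$, which it then reuses at the algebraic points $x=-\alpha/2,-\beta/2$ in Lemma \ref{lem.hewji6w}, something the purely trigonometric substitution does not directly provide. Your remark that the Waring formula is a polynomial identity, so the result holds for all $x\in\mathbb{C}$ without analytic continuation, is a welcome precision that the paper leaves implicit.
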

\begin{proof}
Identities \eqref{eq.n1t72km} and \eqref{eq.h0uilmu} are consequences of the identity
\begin{equation}\label{eq.krxoelz}
n\sum_{k=0}^n \frac {(-2)^k}{n + k}\binom{n + k}{n - k} (1\mp x)^k = (\pm 1)^n T_n(x)
\end{equation}
derived in \cite{Adegoke4}.
\end{proof}
\begin{lemma}\label{lem.hewji6w}
If $n$ is a non-negative integer, then
\begin{align*}
T_n \Big(\!- \frac{\alpha}{2} \Big) =  \begin{cases}
  1, & \text{\rm if $n \equiv 0$}\pmod 5;\\
  - \alpha /2, & \text{\rm if $n \equiv 1$ or $4$}\pmod 5;\\ 
  - \beta /2, & \text{\rm if $n \equiv 2$ or $3$}\pmod 5;  
   \end{cases} \\
T_n \Big(\!-\frac{\beta}{2} \Big) =  \begin{cases}
  1, & \text{\rm if $n \equiv 0$}\pmod 5;\\
  - \beta /2, & \text{\rm if $n \equiv 1$ or $4$}\pmod 5;\\ 
  - \alpha /2, & \text{\rm if $n \equiv 2$ or $3$}\pmod 5.
 \end{cases} 
\end{align*}
\end{lemma}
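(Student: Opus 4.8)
The plan is to use the defining property $T_n(\cos\theta) = \cos(n\theta)$, valid for every real $\theta$ and every non-negative integer $n$; this follows at once from the Binet-like formula for $T_n$ recorded above (with $\sqrt{x^2-1}$ purely imaginary when $|x|<1$), or from the recurrence together with the product-to-sum formula. Since $\alpha/2 = \cos(\pi/5)$ and $-\beta/2 = \cos(2\pi/5)$ — these are among the special values already used in the proof of the lemma containing \eqref{eq.kxkaziy} — we have $-\alpha/2 = -\cos(\pi/5) = \cos(\pi - \pi/5) = \cos(4\pi/5)$ and $-\beta/2 = \cos(2\pi/5)$; in particular both arguments lie in $[-1,1]$, so the substitution $x=\cos\theta$ is legitimate. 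Consequently
\[
T_n\Big(\!-\frac{\alpha}{2}\Big) = \cos\Big(\frac{4n\pi}{5}\Big), \qquad T_n\Big(\!-\frac{\beta}{2}\Big) = \cos\Big(\frac{2n\pi}{5}\Big).
\]

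For the second identity this is immediate: equation \eqref{eq.argxf9e} evaluates $\cos(2n\pi/5)$ in exactly the three cases $n\equiv 0$, $n\equiv 1$ or $4$, $n\equiv 2$ or $3\pmod 5$, yielding $1$, $-\beta/2$, $-\alpha/2$ respectively, which is precisely the claimed formula for $T_n(-\beta/2)$. For the first identity, I would rewrite $\tfrac{4n\pi}{5} = n\pi - \tfrac{n\pi}{5}$ and apply the cosine addition theorem to obtain $\cos(4n\pi/5) = (-1)^n\cos(n\pi/5)$; then \eqref{eq.kxkaziy} supplies $\cos(n\pi/5)$, and a short sign check finishes the job: for $n\equiv 0$ one gets $(-1)^n\cdot(-1)^n = 1$, while for $n\equiv 1$ or $4$ and for $n\equiv 2$ or $3$ the prefactors combine as $(-1)^n\cdot(-1)^{n-1} = -1$, producing $-\alpha/2$ and $-\beta/2$ respectively.

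There is essentially no hard step here; the only points requiring care are (i) confirming that $-\alpha/2$ and $-\beta/2$ really are cosines of the stated angles so that the $\cos\theta$-substitution applies — alternatively one can bypass this by feeding $x=-\alpha/2$ and $x=-\beta/2$ directly into the Binet-like formula for $T_n$, where $x^2-1<0$ — and (ii) tracking the residue classes modulo $5$ and the parity signs through \eqref{eq.kxkaziy}. A fully elementary alternative, if one wishes to avoid trigonometry altogether, is to verify the two formulas directly for $n = 0,1,2,3,4$ and then induct on blocks of length five using $T_{n+1}(x) = 2xT_n(x) - T_{n-1}(x)$ with $x=-\alpha/2$ and $x=-\beta/2$ (exploiting $\alpha^2=\alpha+1$ and $\beta^2=\beta+1$); but the computation above via \eqref{eq.kxkaziy} and \eqref{eq.argxf9e} is shorter.
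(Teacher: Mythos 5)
Your proof is correct and follows the same route as the paper: evaluate $T_n(\cos x)=\cos nx$ at $x=4\pi/5$ and $x=2\pi/5$, then read off the values from \eqref{eq.kxkaziy} and \eqref{eq.argxf9e}. You simply spell out the sign bookkeeping that the paper leaves implicit.
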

\begin{proof}
Evaluate the identity
$T_n(\cos x)=\cos nx$
at $x=4\pi/5$ and $x=2\pi/5$, in turn.
\end{proof}
\begin{theorem}
If $n$ is a positive integer and $t$ is any integer, then
\begin{equation}\label{eq.dfjg0d9}
\begin{split}
\sum_{k = 1}^{\left\lceil {n/2} \right\rceil } \frac{n}{{n + 2k - 1}}&\binom{n + 2k - 1}{n - 2k + 1}5^k F_{2k + t - 1}   - \sum_{k = 0}^{\left\lfloor {n/2} \right\rfloor } {\frac{n}{{n + 2k}}\binom{n + 2k}{n - 2k}5^k L_{2k + t} }\\
&=  \begin{cases}
  -L_t, & \text{\rm if $n \equiv 0$}\pmod 5;\\ 
  L_{t + 1} /2, & \text{\rm  if $n \equiv 1$ or $4$}\pmod 5;\\ 
  - L_{t - 1} /2, & \text{\rm  if $n \equiv 2$ or $3$}\pmod 5; 
  \end{cases} 
\end{split}
\end{equation}
\begin{equation}\label{eq.lv7l7rr}
\begin{split}
\sum_{k = 1}^{\left\lceil {n/2} \right\rceil } \frac{n}{n + 2k - 1}&\binom{n + 2k - 1}{n - 2k + 1}5^{k-1} L_{2k + t - 1}  - \sum_{k = 0}^{\left\lfloor {n/2} \right\rfloor } {\frac{n}{{n + 2k}}\binom{n + 2k}{n - 2k}5^{k} F_{2k + t} }\\
&=  \begin{cases}
  -F_t, & \text{\rm if $n \equiv 0$}\pmod 5;\\
  F_{t + 1} /2, & \text{\rm if $n \equiv 1$ or $4$}\pmod 5;\\ 
  - F_{t - 1} /2, & \text{\rm  if $n \equiv 2$ or $3$}\pmod 5.
 \end{cases} 
\end{split}
\end{equation}
\end{theorem}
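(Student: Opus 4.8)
The plan is to feed the Chebyshev-type identity \eqref{eq.n1t72km} two carefully chosen arguments and then split the resulting sums into even- and odd-indexed parts. First I would set $x=4\pi/5$ and $x=2\pi/5$ in \eqref{eq.n1t72km}. Using $\sin^2(x/2)=(1-\cos x)/2$ together with $\cos(4\pi/5)=-\alpha/2$ and $\cos(2\pi/5)=-\beta/2$ one finds $4\sin^2(2\pi/5)=2+\alpha=\sqrt5\,\alpha$ and $4\sin^2(\pi/5)=2+\beta=-\sqrt5\,\beta$, hence $(-1)^k4^k\sin^{2k}(2\pi/5)=(-\sqrt5\,\alpha)^k$ and $(-1)^k4^k\sin^{2k}(\pi/5)=(\sqrt5\,\beta)^k$. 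On the right-hand side $\cos(4n\pi/5)=T_n(\cos(4\pi/5))=T_n(-\alpha/2)$ and $\cos(2n\pi/5)=T_n(\cos(2\pi/5))=T_n(-\beta/2)$, whose explicit values in the three residue classes are recorded in Lemma \ref{lem.hewji6w}. This yields the companion identities
\[
n\sum_{k=0}^{n}\frac{(-\sqrt5\,\alpha)^k}{n+k}\binom{n+k}{n-k}=T_n(-\alpha/2),\qquad n\sum_{k=0}^{n}\frac{(\sqrt5\,\beta)^k}{n+k}\binom{n+k}{n-k}=T_n(-\beta/2).
\]

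Next I would multiply the first identity by $\alpha^t$ and the second by $\beta^t$, so that the generic summand becomes $(\sqrt5)^k\bigl((-1)^k\alpha^{k+t}\pm\beta^{k+t}\bigr)/(n+k)\cdot\binom{n+k}{n-k}$, and then add them (for \eqref{eq.dfjg0d9}) and subtract them (for \eqref{eq.lv7l7rr}). Splitting the index range $0\le k\le n$ into even values $k=2j$, which run over $0\le j\le\lfloor n/2\rfloor$, and odd values $k=2j-1$, which run over $1\le j\le\lceil n/2\rceil$ — exactly the two ranges appearing in \eqref{eq.dfjg0d9} and \eqref{eq.lv7l7rr}, and the shift $k\mapsto 2j-1$ is what turns $\binom{n+k}{n-k}$ into $\binom{n+2k-1}{n-2k+1}$ — I would apply \eqref{eq.fieo8cp} in the form $\alpha^r+\beta^r=L_r$, $\alpha^r-\beta^r=\sqrt5\,F_r$. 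For the sum: even $k$ contributes $(\sqrt5)^{2j}L_{2j+t}=5^jL_{2j+t}$ and odd $k$ contributes $(\sqrt5)^{2j-1}(-\sqrt5\,F_{2j+t-1})=-5^jF_{2j+t-1}$; for the difference: even $k$ contributes $(\sqrt5)^{2j}\sqrt5\,F_{2j+t}=5^j\sqrt5\,F_{2j+t}$ and odd $k$ contributes $-(\sqrt5)^{2j-1}L_{2j+t-1}=-5^{j-1}\sqrt5\,L_{2j+t-1}$. After an overall sign change the sum identity becomes exactly \eqref{eq.dfjg0d9} with right-hand side $-\bigl(\alpha^tT_n(-\alpha/2)+\beta^tT_n(-\beta/2)\bigr)$, and after dividing by $\sqrt5$ and changing sign the difference identity becomes exactly \eqref{eq.lv7l7rr} with right-hand side $\bigl(\beta^tT_n(-\beta/2)-\alpha^tT_n(-\alpha/2)\bigr)/\sqrt5$.

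Finally I would evaluate these two right-hand sides case by case with Lemma \ref{lem.hewji6w}: for $n\equiv0$ both Chebyshev values equal $1$; for $n\equiv1$ or $4$ they equal $-\alpha/2$ and $-\beta/2$; for $n\equiv2$ or $3$ they equal $-\beta/2$ and $-\alpha/2$. Substituting and simplifying with $\alpha\beta=-1$ and \eqref{eq.fieo8cp} once more turns $-\bigl(\alpha^tT_n(-\alpha/2)+\beta^tT_n(-\beta/2)\bigr)$ into $-L_t$, $L_{t+1}/2$, $-L_{t-1}/2$, and $\bigl(\beta^tT_n(-\beta/2)-\alpha^tT_n(-\alpha/2)\bigr)/\sqrt5$ into $-F_t$, $F_{t+1}/2$, $-F_{t-1}/2$, which is the claim. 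The main obstacle is purely the bookkeeping of the middle step: tracking the signs $(-1)^k$, making sure the $\sqrt5$'s coming from $(\sqrt5)^k$ and from the Binet relations merge into clean integer powers of $5$ (with a single residual $\sqrt5$ in the difference case, which is then cancelled), and confirming that all three residue classes mod $5$ reduce correctly in one stroke; the two evaluation steps are then routine.
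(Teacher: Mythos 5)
Your proposal is correct and follows essentially the same route as the paper: evaluating \eqref{eq.n1t72km} at $x=4\pi/5$ and $x=2\pi/5$ is literally the paper's evaluation of \eqref{eq.krxoelz} (upper sign) at $x=-\alpha/2$ and $x=-\beta/2$, and your subsequent steps---forming the $\pm$ combination weighted by $\alpha^t$, $\beta^t$ (the paper's $\lambda=\pm1$), splitting into even/odd indices, and finishing with Lemma \ref{lem.hewji6w} and the Binet forms---match the paper's argument, with all the sign and $\sqrt5$ bookkeeping done correctly.
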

\begin{proof}
Using $x=-\alpha/2$ and $x=-\beta/2$, in turn, in \eqref{eq.krxoelz} with the upper sign gives, in view of Lemma \ref{lem.hewji6w},
\begin{align*}
\sum_{k = 0}^n \frac{n}{{n + k}}\binom{n + k}{n - k}&(\sqrt 5 )^k \big(( - 1)^{k + 1} \lambda \alpha ^{k + t}  - \beta ^{k + t} \big)\\&=  \begin{cases}
  -(\lambda\alpha^t +\beta^t), & \text{\rm if $n \equiv 0$}\pmod 5;\\ 
    (\lambda\alpha^{t + 1} +\beta^{t + 1}) /2, & \text{\rm if $n \equiv 1$ or $4$}\pmod 5;\\ 
  - (\lambda\alpha^{t - 1} +\beta^{t - 1}) /2, & \text{\rm  if $n \equiv 2$ or $3$}\pmod 5; 
 \end{cases} 
\end{align*}
from which \eqref{eq.dfjg0d9} and \eqref{eq.lv7l7rr} now follow upon setting $\lambda=1$ and $\lambda=-1$, in turn, and using the Binet formulas and the summation identity
$\sum\limits_{j = 0}^n {f_j }  = \sum\limits_{j = 0}^{\left\lfloor {n/2} \right\rfloor } {f_{2j} }  + \sum\limits_{j = 1}^{\left\lceil {n/2} \right\rceil } {f_{2j - 1}}.$
\end{proof}

We observe the following special cases of the prior result.
\begin{corollary}
If $n$ is a positive integer, then
\begin{equation*}
\sum_{k = 1}^{\left\lceil {n/2} \right\rceil } {\frac{n}{{n + 2k - 1}}\binom{n + 2k -
		 1}{n - 2k + 1}5^k L_{2k + \delta - 1} }  = \sum_{k = 0}^{\left\lfloor {n/2} \right\rfloor } {\frac{n}{{n + 2k}}\binom{n + 2k}{n - 2k}5^{k + 1} F_{2k + \delta}},
\end{equation*}
where 
\begin{equation*}
\delta=
\begin{cases}
  0, & \text{\rm if $n \equiv 0$}\pmod 5;\\
  -1, & \text{\rm if $n \equiv 1$ or $4$}\pmod 5;\\ 
  1, & \text{\rm if $n \equiv 2$ or $3$}\pmod 5.
 \end{cases} 
\end{equation*}
\end{corollary}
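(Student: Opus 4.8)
The plan is to derive the identity directly from equation \eqref{eq.lv7l7rr}, which is the only one of the two identities in the preceding theorem whose right-hand side can be forced to vanish. First I would compare summands: the first sum in the claim carries $5^k L_{2k+\delta-1}$, whereas the first sum in \eqref{eq.lv7l7rr} carries $5^{k-1}L_{2k+t-1}$, so they differ by a factor of $5$; similarly $5^{k+1}F_{2k+\delta}$ in the claim differs from $5^kF_{2k+t}$ in \eqref{eq.lv7l7rr} by the same factor of $5$. Hence the asserted identity is exactly $5$ times equation \eqref{eq.lv7l7rr} taken at $t=\delta$, provided that the right-hand side of \eqref{eq.lv7l7rr} becomes $0$ under this specialisation.

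Next I would verify that vanishing case by case. For $n\equiv 0\pmod 5$ we have $\delta=0$, and the right-hand side of \eqref{eq.lv7l7rr} is $-F_0=0$; for $n\equiv 1$ or $4\pmod 5$ we have $\delta=-1$, and it is $F_{\delta+1}/2=F_0/2=0$; for $n\equiv 2$ or $3\pmod 5$ we have $\delta=1$, and it is $-F_{\delta-1}/2=-F_0/2=0$. In every residue class the constant term is an integer multiple of $F_0=0$, so \eqref{eq.lv7l7rr} reduces to the equality of the two sums, and multiplying through by $5$ produces precisely the stated identity. The case split "$n\equiv 1$ or $4$" versus "$n\equiv 2$ or $3$" is inherited verbatim from \eqref{eq.lv7l7rr}, so no regrouping is needed.

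Since the whole argument is a substitution into an already proved theorem, there is no genuine obstacle; the only point requiring a moment's care is the bookkeeping of the powers of $5$ together with the observation that one must invoke \eqref{eq.lv7l7rr} rather than \eqref{eq.dfjg0d9}, because the right-hand side of the latter is $-L_t$, $L_{t+1}/2$, or $-L_{t-1}/2$ and the Lucas numbers are never $0$, so no single value of $t$ annihilates it. Alternatively one could re-derive the corollary from scratch by putting $x=-\alpha/2$ and $x=-\beta/2$ in \eqref{eq.krxoelz} with the upper sign, applying Lemma \ref{lem.hewji6w} and the Binet formulas with the splitting $\sum_{j=0}^n f_j=\sum_{j=0}^{\lfloor n/2\rfloor}f_{2j}+\sum_{j=1}^{\lceil n/2\rceil}f_{2j-1}$, but this merely retraces the proof of the theorem and brings no simplification.
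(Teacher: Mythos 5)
Your proposal is correct and matches the paper's intent exactly: the corollary is stated as a special case of the preceding theorem, obtained by setting $t=\delta$ in \eqref{eq.lv7l7rr} so that the right-hand side becomes a multiple of $F_0=0$ in every residue class, and then multiplying through by $5$. Your additional observation that \eqref{eq.dfjg0d9} cannot serve this purpose because Lucas numbers never vanish is a correct and worthwhile sanity check.
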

\begin{theorem}\label{th11}
If $n$ is a positive integer and $t$ is any integer, then
\begin{align*}
&\sum_{k = 0}^n {( - 1)^{n-k} \frac{n}{{n + k}}\binom{n + k}{n - k}}L_{2k + t}  = \begin{cases}
 L_t,  &\text{\rm if $n=0\pmod 5$};\\ 
 L_{t - 1} /2,&\text{\rm if $n=1$ or $4\pmod 5$}; \\ 
 -L_{t + 1} /2,&\text{\rm if $n=2$ or $3\pmod 5$}; \\ 
 \end{cases}\\ 
&\sum_{k = 0}^n {( - 1)^{n-k} \frac{n}{{n + k}}\binom{n + k}{n - k}}F_{2k + t}  =  \begin{cases}
 F_t,  &\text{\rm if $n=0\pmod 5$};\\ 
 F_{t - 1} /2,&\text{\rm if $n=1$ or $4\pmod 5$}; \\ 
 -F_{t + 1} /2,&\text{\rm if $n=2$ or $3\pmod 5$}. 
 \end{cases}
\end{align*}
\end{theorem}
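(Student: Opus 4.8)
The plan is to mimic the proof strategy used earlier for Theorem~\ref{th7} and the preceding Chebyshev theorem, namely to evaluate identity~\eqref{eq.h0uilmu} at a suitable argument, convert to the Binet domain, and then recombine. First I would rewrite \eqref{eq.h0uilmu} in terms of the Chebyshev polynomial $T_n$: since $n\sum_{k=0}^n (-1)^{n-k}\frac{4^k}{n+k}\binom{n+k}{n-k}\cos^{2k}(x/2)=\cos nx=T_n(\cos x)$, and putting $y=\cos x$ gives the polynomial identity $n\sum_{k=0}^n(-1)^{n-k}\frac{4^k}{n+k}\binom{n+k}{n-k}\bigl(\tfrac{1+y}{2}\bigr)^{k}=T_n(y)$, i.e.\ $n\sum_{k=0}^n \frac{(-1)^{n-k}2^k}{n+k}\binom{n+k}{n-k}(1+y)^{k}=T_n(y)$, which is exactly \eqref{eq.krxoelz} with the lower sign and $x\mapsto y$. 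So the relevant tool is \eqref{eq.krxoelz} (lower-sign version, i.e.\ the $(1+x)^k$ form yielding $(-1)^nT_n(x)$).

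Next I would substitute $x=-\alpha/2$ and $x=-\beta/2$ into that form. Using $1-\alpha/2 = \tfrac{2-\alpha}{2}$; but more to the point, recall $1+x$ with $x=-\alpha/2$: since $\alpha^2=\alpha+1$, one checks $(1+x)$ evaluated here relates to powers of $\alpha$ — concretely, exactly as in the proof of the preceding theorem, the substitution turns $\sum 2^k(1+x)^k$ into a sum over $(\sqrt5)^k$ times $\alpha^{k}$ (resp.\ $\beta^k$) up to signs, because $2(1-(-\alpha/2))=2-\alpha$ and $2-\alpha=1-\beta = \sqrt5\cdot(\text{something})$; I would track the precise constant via the identities $2-\alpha = -\sqrt5\,\beta$ and $2-\beta=\sqrt5\,\alpha$ (equivalently $\alpha-\beta=\sqrt5$, $\alpha\beta=-1$). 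Multiplying through by $\alpha^{t}$ (resp.\ $\beta^{t}$) and invoking Lemma~\ref{lem.hewji6w} for the right-hand side — noting $(-1)^nT_n(-\alpha/2)$ gives $(-1)^n\cdot 1$, $(-1)^{n-1}\alpha/2$, $(-1)^{n-1}\beta/2$ in the three residue classes, and the $(-1)^n$ cancels with the $(-1)^{n-k}$ collapsing appropriately — yields two scalar identities, one in powers of $\alpha$ and one in powers of $\beta$. Adding them and dividing by $\alpha-\beta$, then using $2\alpha^r=L_r+F_r\sqrt5$, $2\beta^r=L_r-F_r\sqrt5$ from \eqref{eq.fieo8cp}, produces the Lucas and Fibonacci statements. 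I would finish by checking the three residue cases $n\equiv0$, $n\equiv1,4$, $n\equiv2,3\pmod5$ against the right-hand sides $L_t$, $L_{t-1}/2$, $-L_{t+1}/2$ (and the $F$ analogues), using $L_{t+1}+\beta L_t\cdot(\cdots)$-type collapses $\alpha^{t}\cdot\alpha = \alpha^{t+1}$ etc.

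The main obstacle I anticipate is pinning down the exact powers and signs in the change of variable $x=-\alpha/2\mapsto$ powers of $\alpha$: one must verify that $\sum_k \frac{(-1)^{n-k}2^k}{n+k}\binom{n+k}{n-k}(1-\alpha/2)^k$ really telescopes, after multiplying by a suitable power of $\alpha$, into $\sum_k (\pm1)^{\cdots}\frac{n}{n+k}\binom{n+k}{n-k}(\sqrt5)^k$ times the claimed $\alpha$-power — in particular getting the residue-dependent shift $t\to t$, $t\to t-1$, $t\to t+1$ to come out right rather than, say, $t+1$ versus $t-1$ swapped. This is the same bookkeeping that appears in the proof of the previous Chebyshev theorem, so I would cross-check a couple of small $n$ (e.g.\ $n=1,2,3$) numerically to make sure the index shifts and the factor-of-$2$ (the $/2$ in $L_{t\pm1}/2$) are consistent, and then present the computation in the compressed ``evaluate, apply Lemma~\ref{lem.hewji6w}, use Binet'' style used throughout the paper.
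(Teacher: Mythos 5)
Your overall strategy is sound and is essentially the paper's own proof in different clothing: the paper sets $x=\pi/5$ in \eqref{eq.n1t72km}, where $4^k\sin^{2k}(\pi/10)=\bigl(2(1-\cos(\pi/5))\bigr)^k=(2-\alpha)^k$, which is exactly your $2^k(1+y)^k$ at $y=-\alpha/2$; it then reads off both the Lucas and Fibonacci identities from the single $\beta$-identity by separating rational and irrational parts via \eqref{eq.fieo8cp}, whereas you evaluate at both $-\alpha/2$ and $-\beta/2$ and add/subtract. Either packaging works.

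However, the concrete algebra you wrote down for the key collapse is wrong, and if followed literally it would derail the proof. You assert $2-\alpha=-\sqrt5\,\beta$ and $2-\alpha=1-\beta$, and conclude that the sum becomes ``$(\sqrt5)^k$ times $\alpha^k$ (resp.\ $\beta^k$).'' In fact $2-\alpha=\tfrac{3-\sqrt5}{2}=\beta^2$ while $-\sqrt5\,\beta=\tfrac{5-\sqrt5}{2}$ and $1-\beta=\alpha$, so neither claimed identity holds. The correct facts are $2-\alpha=\beta^2$ and $2-\beta=\alpha^2$, which make $2^k(1-\alpha/2)^k=\beta^{2k}$ and $2^k(1-\beta/2)^k=\alpha^{2k}$; this is precisely why the theorem features $L_{2k+t}$, $F_{2k+t}$ with \emph{even} subscript shifts and \emph{no} $5^k$ weights. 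The identities you quoted ($2+\alpha=\sqrt5\,\alpha$, $2+\beta=-\sqrt5\,\beta$, i.e.\ the $(1-x)^k$/upper-sign branch of \eqref{eq.krxoelz}) belong to the other Chebyshev theorem, the one producing \eqref{eq.dfjg0d9}--\eqref{eq.lv7l7rr} with their $5^k$ factors. You did flag this bookkeeping as the step needing verification, and a numerical check at small $n$ would have exposed the error, but as written the proposal mixes up the two branches at exactly the point where the proof is decided. With $2-\alpha=\beta^2$ in place, the rest of your outline (Lemma~\ref{lem.hewji6w}, multiply by $\beta^t$ resp.\ $\alpha^t$ using $-\alpha/2=\beta^{-1}/2$ and $-\beta/2=\alpha^{-1}/2$, then Binet) goes through and yields the stated right-hand sides.
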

\begin{proof}
Set $x=\pi/5$ in \eqref{eq.n1t72km} and use \eqref{eq.kxkaziy} and the fact that $\sin(\pi/10)=-\beta/2$ to obtain
\begin{equation*}
\sum_{k = 0}^n {( - 1)^{n-k} \frac{n}{{n + k}}\binom{n + k}{n - k}\beta^{2k + t} }  = \begin{cases}
 \beta^t,  &\text{\rm if $n=0\pmod 5$};\\ 
 \beta^{t - 1} /2,&\text{\rm if $n=1$ or $4\pmod 5$}; \\ 
 -\beta^{t + 1} /2,&\text{\rm if $n=2$ or $3\pmod 5$}; \\ 
 \end{cases}\
\end{equation*}
from which the results follow by \eqref{eq.fieo8cp}.
\end{proof}

Using Theorem \ref{th11}, we have the following binomial Fibonacci identities modulo 5.
\begin{corollary}
If $n$ is a positive integer, then
\begin{equation*}
\sum_{k = 0}^n { \frac{( - 1)^k}{{n + k}}\binom{n + k}{n - k}}F_{2k + \delta}  =0, 
\end{equation*}
where
\begin{equation*}
\delta= \begin{cases}
 0,  &\text{\rm if $n=0\pmod 5$};\\ 
 1,&\text{\rm if $n=1$ or $4\pmod 5$}; \\ 
 -1,&\text{\rm if $n=2$ or $3\pmod 5$}. 
 \end{cases}
\end{equation*}
\end{corollary}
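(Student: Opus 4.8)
The plan is to read this corollary off directly from Theorem \ref{th11}. The key observation is the identity $(-1)^{n-k}=(-1)^n(-1)^k$, which shows that the sum appearing in the corollary differs from the Fibonacci sum of Theorem \ref{th11} — with the free parameter $t$ specialized to $\delta$ — only by the nonzero multiplicative constant $(-1)^n n$. Concretely, I would record
\[
(-1)^n n \sum_{k = 0}^n \frac{(-1)^k}{n+k}\binom{n+k}{n-k} F_{2k+\delta} = \sum_{k = 0}^n (-1)^{n-k}\frac{n}{n+k}\binom{n+k}{n-k} F_{2k+\delta},
\]
noting that $n+k\ge 1>0$ throughout the range $0\le k\le n$, so every term is well defined. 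It then suffices to show the right-hand side vanishes for the prescribed $\delta$.

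The second step is a short case check on the residue of $n$ modulo $5$, invoking the closed forms of Theorem \ref{th11}. If $n\equiv 0\pmod 5$, then $\delta=0$ and the sum equals $F_0=0$. If $n\equiv 1$ or $4\pmod 5$, then $\delta=1$ and the sum equals $F_{\delta-1}/2 = F_0/2 = 0$. If $n\equiv 2$ or $3\pmod 5$, then $\delta=-1$ and the sum equals $-F_{\delta+1}/2 = -F_0/2 = 0$. In each case the right-hand side is $0$, and cancelling the nonzero factor $(-1)^n n$ yields the claim.

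There is essentially no obstacle: the entire content sits in Theorem \ref{th11}, and $\delta$ has evidently been chosen precisely so that the Fibonacci index on the right side of that theorem collapses to $F_0=0$ in every residue class. The only points requiring mild care are bookkeeping ones — keeping the sign $(-1)^{n-k}=(-1)^n(-1)^k$ straight, observing that the positive integer $n$ and the unit $(-1)^n$ may be divided out, and confirming that the three values of $\delta$ send the index $t\mapsto t$, $t\mapsto t-1$, $t\mapsto t+1$ respectively onto $0$ in the three cases.
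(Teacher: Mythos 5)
Your proposal is correct and matches the paper's intended derivation: the corollary is stated as an immediate consequence of Theorem \ref{th11}, obtained by setting $t=\delta$ so that the right-hand side collapses to $F_0=0$ in each residue class, and dividing out the nonzero factor $(-1)^n n$. The case check and the sign bookkeeping $(-1)^{n-k}=(-1)^n(-1)^k$ are exactly right.
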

\begin{lemma}
If $x$ is a complex variable and $n$ is a positive integer, then
\begin{align}
&\sum_{k = 1}^n {( - 1)^{k - 1} \frac{{4^k k}}{{n + k}}\binom{n + k}{n - k}\sin ^{2k - 2} \Big(\frac{x}{2}\Big)}  = \frac{{2\sin nx}}{\sin x},\label{eq.jphe5qh}\\
&\sum_{k = 1}^n {( - 1)^{n-k} \frac{{4^k k}}{{n + k}}\binom{n + k}{n - k}\cos ^{2k - 2} \Big(\frac{x}{2}\Big)}  = \frac{{2\sin nx}}{\sin x}.\label{eq..uyikmgb}
\end{align}
\end{lemma}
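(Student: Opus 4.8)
The plan is to obtain both formulas by differentiating the two identities \eqref{eq.n1t72km} and \eqref{eq.h0uilmu} of the previous lemma, viewed as polynomial identities in the single quantities $\sin^2(x/2)$ and $\cos^2(x/2)$, respectively. The only additional ingredients are two standard facts about Chebyshev polynomials: the derivative relation $T_n'(y)=n\,U_{n-1}(y)$ (obtained by differentiating $T_n(\cos\theta)=\cos n\theta$ in $\theta$ and comparing polynomials in $y=\cos\theta$), and the evaluation $U_{n-1}(\cos x)=\sin(nx)/\sin x$ (immediate from the Binet-like formula for $U_n$ recorded above). Besides these we only use the half-angle relations $\cos x=1-2\sin^2(x/2)=2\cos^2(x/2)-1$. (Equivalently, one could differentiate \eqref{eq.krxoelz} directly; the two routes are the same since \eqref{eq.n1t72km}--\eqref{eq.h0uilmu} were derived from \eqref{eq.krxoelz}.)

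For \eqref{eq.jphe5qh}, set $u=\sin^2(x/2)$, so that $\cos x=1-2u$ and $\cos nx=T_n(\cos x)=T_n(1-2u)$; then \eqref{eq.n1t72km} becomes the identity $n\sum_{k=0}^n \frac{(-4)^k}{n+k}\binom{n+k}{n-k}u^{k}=T_n(1-2u)$ of polynomials in $u$, valid for all $u\in\mathbb C$ since $\sin^2(x/2)$ runs over all of $\mathbb C$. Differentiating both sides with respect to $u$ (the $k=0$ term, a constant, drops out) and using $T_n'(1-2u)=n\,U_{n-1}(1-2u)$, then cancelling $n$, gives $\sum_{k=1}^n \frac{(-4)^k\,k}{n+k}\binom{n+k}{n-k}u^{k-1}=-2\,U_{n-1}(1-2u)$. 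Substituting back $u=\sin^2(x/2)$, using $(-4)^k u^{k-1}=(-1)^k 4^k\sin^{2k-2}(x/2)$ together with $U_{n-1}(1-2u)=U_{n-1}(\cos x)=\sin(nx)/\sin x$, and multiplying through by $-1$ yields exactly \eqref{eq.jphe5qh}.

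For \eqref{eq..uyikmgb} the argument is identical with $v=\cos^2(x/2)$, so that $\cos x=2v-1$ and \eqref{eq.h0uilmu} reads $n\sum_{k=0}^n (-1)^{n-k}\frac{4^k}{n+k}\binom{n+k}{n-k}v^{k}=T_n(2v-1)$; differentiating in $v$ and using $T_n'(2v-1)=n\,U_{n-1}(2v-1)$ gives $\sum_{k=1}^n (-1)^{n-k}\frac{4^k k}{n+k}\binom{n+k}{n-k}v^{k-1}=2\,U_{n-1}(\cos x)=2\sin(nx)/\sin x$, which is \eqref{eq..uyikmgb} after writing $v^{k-1}=\cos^{2k-2}(x/2)$; here the constant multiplier stays $+2$ because $\tfrac{d}{dv}(2v-1)=2$, so no extra sign appears. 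I do not anticipate a genuine obstacle: the only points requiring care are the sign bookkeeping in rewriting $(-4)^k$ and carrying the $(-1)^{n-k}$ factor through, and observing that differentiating a polynomial identity valid for all complex $u$ (resp.\ $v$) is legitimate, which then lets us reinstate the complex variable $x$ via $u=\sin^2(x/2)$ (resp.\ $v=\cos^2(x/2)$).
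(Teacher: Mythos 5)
Your proof is correct, but it takes a different route from the paper's. The paper's proof simply cites two ready-made expansions of $U_{n-1}$ from \cite{Adegoke4}, namely $\sum_{k=1}^n (-1)^{n-k}\frac{2^k k}{n+k}\binom{n+k}{n-k}(1\mp x)^{k-1}=(\mp 1)^{n-1}U_{n-1}(x)$, and evaluates them at $x=\cos x$ using the half-angle formulas $1-\cos x=2\sin^2(x/2)$, $1+\cos x=2\cos^2(x/2)$ together with $U_{n-1}(\cos x)=\sin(nx)/\sin x$. You instead differentiate the already-established $T_n$ identities \eqref{eq.n1t72km} and \eqref{eq.h0uilmu} as polynomial identities in $u=\sin^2(x/2)$ and $v=\cos^2(x/2)$, invoking $T_n'(y)=n\,U_{n-1}(y)$; this is legitimate because both sides are polynomials in $u$ (resp.\ $v$) agreeing at infinitely many points, and your sign bookkeeping ($(-4)^k u^{k-1}=(-1)^k4^k\sin^{2k-2}(x/2)$, the factor $-2$ from the inner derivative of $1-2u$ versus $+2$ from $2v-1$) checks out exactly against \eqref{eq.jphe5qh} and \eqref{eq..uyikmgb}. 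The two arguments are of course cousins --- the $U$-identities quoted from \cite{Adegoke4} are precisely the derivatives of the $T$-identity \eqref{eq.krxoelz}, as you note --- but your version has the advantage of being self-contained within the present paper, needing only the standard derivative relation between the two kinds of Chebyshev polynomials rather than an external reference, at the cost of a slightly longer chain of substitutions.
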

\begin{proof}
Identities \eqref{eq.jphe5qh} and  \eqref{eq..uyikmgb} 
come from the following identities derived in \cite{Adegoke4}:
\begin{align*}
&\sum_{k=1}^n (-1)^{n-k}   \frac {2^k k }{n + k}\binom{n + k}{n - k} (1 \mp x)^{k-1} = (\mp 1)^{n-1}  U_{n-1} (x),\\
&\sum_{k=1}^n (-1)^{n-k}  \frac {4^kk}{n + k}\binom{n + k}{n - k} x^{2k-1} = U_{2n-1} (x).
\end{align*}
\end{proof}
\begin{theorem}\label{xyz123}
If $n$ is a positive integer and $n$ is any integer, then
\begin{align*}
&\sum_{k = 1}^n {( - 1)^{k - 1} \frac{k}{{n + k}}\binom{n + k}{n - k}L_{2k + t} }  =  \begin{cases}
 0, &\text{\rm if $n\equiv 0\pmod 5$}; \\ 
 ( - 1)^{\left\lfloor {n/5} \right\rfloor } L_{t + 2}/2, &\text{\rm if $n\equiv 1$ or $4\pmod 5$} ; \\ 
 ( - 1)^{\left\lfloor {n/5} \right\rfloor  + 1} L_{t + 1}/2, &\text{\rm if $n\equiv 2$ or $3\pmod 5$};  
 \end{cases}\\
 &\sum_{k = 1}^n {( - 1)^{k - 1} \frac{k}{{n + k}}\binom{n + k}{n - k}F_{2k + t} }  =  \begin{cases}
 0, &\text{\rm if $n\equiv 0\pmod 5$}; \\ 
 ( - 1)^{\left\lfloor {n/5} \right\rfloor } F_{t + 2}/2, &\text{\rm  if $n\equiv 1$ or $4\pmod 5$} ; \\ 
 ( - 1)^{\left\lfloor {n/5} \right\rfloor  + 1} F_{t + 1}/2, &\text{\rm if $n\equiv 2$ or $3\pmod 5$}.  
 \end{cases} 
\end{align*}
\end{theorem}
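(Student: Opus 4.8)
The plan is to evaluate the trigonometric identity \eqref{eq.jphe5qh} at the special arguments $x=\pi/5$ and $x=3\pi/5$, in the same spirit as the proof of Theorem~\ref{th11}. Recall that $\sin(\pi/10)=-\beta/2$, and note likewise that $\sin(3\pi/10)=\sin 54^\circ=(1+\sqrt5)/4=\alpha/2$. Consequently $\sin^{2k-2}(\pi/10)=(\beta^2/4)^{k-1}$ and $\sin^{2k-2}(3\pi/10)=(\alpha^2/4)^{k-1}$, so that $4^k\sin^{2k-2}(\pi/10)=4\beta^{2k-2}$ and $4^k\sin^{2k-2}(3\pi/10)=4\alpha^{2k-2}$.

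First I would put $x=\pi/5$ in \eqref{eq.jphe5qh}. After pulling the constant $4\beta^{-2}$ outside the sum and rewriting the right-hand side, this becomes
\begin{equation*}
\sum_{k=1}^n (-1)^{k-1}\frac{k}{n+k}\binom{n+k}{n-k}\beta^{2k}
  = \frac{\beta^2}{2}\cdot\frac{\sin(n\pi/5)}{\sin(\pi/5)},
\end{equation*}
which, by \eqref{eq.efvrm4x}, vanishes when $n\equiv 0\pmod 5$, equals $\tfrac12(-1)^{\lfloor n/5\rfloor}\beta^2$ when $n\equiv 1,4\pmod 5$, and equals $\tfrac12(-1)^{\lfloor n/5\rfloor}\alpha\beta^2$ when $n\equiv 2,3\pmod 5$. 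Multiplying by $\beta^t$ and using $\alpha\beta=-1$ (so that $\alpha\beta^{t+2}=-\beta^{t+1}$) gives a closed form for $\sum_{k=1}^n(-1)^{k-1}\frac{k}{n+k}\binom{n+k}{n-k}\beta^{2k+t}$. Running the same computation with $x=3\pi/5$, the factor $4\alpha^{2k-2}$, and \eqref{eq.efvrm4y} produces the analogous closed form for $\sum_{k=1}^n(-1)^{k-1}\frac{k}{n+k}\binom{n+k}{n-k}\alpha^{2k+t}$, with the relation $\beta\alpha^{t+2}=-\alpha^{t+1}$ taking care of the $n\equiv 2,3$ branch.

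Finally I would combine these two $\alpha$- and $\beta$-weighted identities through the Binet forms $L_m=\alpha^m+\beta^m$ and $F_m=(\alpha^m-\beta^m)/\sqrt5$: adding them yields the Lucas statement, and subtracting then dividing by $\alpha-\beta=\sqrt5$ yields the Fibonacci statement. Because the right-hand sides have been kept in the symmetric shape $\tfrac12(-1)^{\lfloor n/5\rfloor}\alpha^{t+2}$, $\tfrac12(-1)^{\lfloor n/5\rfloor+1}\alpha^{t+1}$ (and the $\beta$ counterparts), the terms $L_{t+2}/2$, $L_{t+1}/2$ (respectively $F_{t+2}/2$, $F_{t+1}/2$) fall out immediately, the extra sign $(-1)^{\lfloor n/5\rfloor+1}$ in the $n\equiv 2,3$ case being exactly the by-product of the $\alpha\beta=-1$ reductions. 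The whole argument is routine once \eqref{eq.jphe5qh} and Lemma~\ref{lem3} are in hand; the only point demanding care is the bookkeeping of these sign flips together with the index shift from $t+2$ down to $t+1$ in the $n\equiv 2,3$ branch, but this is purely mechanical and presents no real obstacle.
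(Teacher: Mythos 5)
Your proposal is correct and is essentially the paper's own proof: the paper likewise sets $x=\pi/5$ and $x=3\pi/5$ in \eqref{eq.jphe5qh} and invokes \eqref{eq.efvrm4x} and \eqref{eq.efvrm4y}, and your evaluations $\sin(\pi/10)=-\beta/2$, $\sin(3\pi/10)=\alpha/2$ and the subsequent $\alpha\beta=-1$ sign bookkeeping all check out. You have simply written out the details that the paper's one-line proof leaves implicit.
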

\begin{proof}
Set $x=\pi/5$ and $x=3\pi/5$, respectively, in \eqref{eq.jphe5qh}, and use \eqref{eq.efvrm4x} and \eqref{eq.efvrm4y}.
\end{proof}
\begin{remark}
Theorem \ref{xyz123} can also be proved using \eqref{eq..uyikmgb}.
Using the trigonometric identities
$
\sin 2x = 2 \sin x\cos x$ and $\cos 3x =4\cos^3x-3\cos x$
and working with $x=2\pi/5$ and $x=6\pi/5$, respectively, we end with
\begin{equation*}
\begin{split}
& 2\sum_{k = 1}^n ( - 1)^{k - 1} \frac{k}{{n + k}} \binom{n + k}{n - k} L_{2k - 1 + t} \\
& \qquad\qquad
 = \begin{cases}
 0, &\text{\rm if $n\equiv 0\pmod 5$}; \\ 
 ( - 1)^{\left\lfloor {n/5} \right\rfloor } (\alpha^{t+1}-\beta^{t-3}+3\beta^{t-1}), &\text{\rm if $n\equiv 1$ or $4\pmod 5$} ; \\ 
 ( - 1)^{\left\lfloor {n/5} \right\rfloor } (-\alpha^{t}+\beta^{t+4}-3\beta^{t+2}), &\text{\rm if $n\equiv 2$ or $3\pmod 5$}; 
 \end{cases}
\end{split}
\end{equation*}
and
\begin{equation*}
\begin{split}
& 2\sqrt5\sum_{k = 1}^n {( - 1)^{k - 1} \frac{k}{{n + k}}\binom{n + k}{n - k} F_{2k - 1 + t} } \\
& \qquad\qquad = \begin{cases}
 0, &\text{\rm if $n\equiv 0\pmod 5$}; \\ 
 ( - 1)^{\left\lfloor {n/5} \right\rfloor } \big(\alpha^{t+1}+\beta^{t-3}-3\beta^{t-1}\big), &\text{\rm if $n\equiv 1$ or $4\pmod 5$} ; \\ 
 ( - 1)^{\left\lfloor {n/5} \right\rfloor } \big(-\alpha^{t}-\beta^{t+4}+3\beta^{t+2}\big), &\text{\rm if $n\equiv 2$ or $3\pmod 5$}.  
 \end{cases} 
\end{split}
\end{equation*}
To get Theorem \ref{xyz123} simplify the terms in brackets and replace $t$ by $t+1$.
\end{remark}

Applying Theorem \ref{xyz123} yields the following two corollaries.
\begin{corollary}
If $n$ is a positive integer, then
\begin{equation*}
 \sum_{k = 1}^n {( - 1)^{k} \frac{k}{{n + k}}\binom{n + k}{n - k}F_{2k - \delta} } =0,
\end{equation*}
where
\begin{equation*}
\delta = \begin{cases}
 2, &\text{\rm if $n\equiv 1$ or $4\pmod 5$} ; \\ 
 1, &\text{\rm if $n\equiv 2$ or $3\pmod 5$}.  
 \end{cases} 
\end{equation*}
\end{corollary}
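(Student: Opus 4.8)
The plan is to obtain the corollary as an immediate specialization of the Fibonacci half of Theorem~\ref{xyz123}. That theorem asserts that for every positive integer $n$ and every integer $t$ the sum
\[
\sum_{k = 1}^n ( - 1)^{k - 1} \frac{k}{n + k}\binom{n + k}{n - k}F_{2k + t}
\]
equals $0$ when $n\equiv 0\pmod 5$, equals $(-1)^{\lfloor n/5\rfloor}F_{t+2}/2$ when $n\equiv 1$ or $4\pmod 5$, and equals $(-1)^{\lfloor n/5\rfloor+1}F_{t+1}/2$ when $n\equiv 2$ or $3\pmod 5$. So the whole strategy is to choose the free parameter $t$ in each residue class so that the right-hand side is forced to vanish.

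First I would match the subscripts: writing $t=-\delta$ turns $F_{2k+t}$ into $F_{2k-\delta}$, so the claimed identity is precisely the displayed one with $t=-2$ in the residue classes $n\equiv 1,4\pmod 5$ and with $t=-1$ in the residue classes $n\equiv 2,3\pmod 5$. Second, with these choices the two nonzero right-hand sides become $(-1)^{\lfloor n/5\rfloor}F_{0}/2$ and $(-1)^{\lfloor n/5\rfloor+1}F_{0}/2$, and both are $0$ since $F_0=0$. Hence $\sum_{k=1}^n(-1)^{k-1}\frac{k}{n+k}\binom{n+k}{n-k}F_{2k-\delta}=0$, and multiplying by $-1$ replaces the sign factor $(-1)^{k-1}$ by $(-1)^{k}$ without changing the value $0$, which is exactly the stated form. (The class $n\equiv 0\pmod 5$ is left out of the statement because $\delta$ is undefined there, though the identity holds trivially in that case as well, the right-hand side of Theorem~\ref{xyz123} already being $0$.)

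There is essentially no obstacle: the corollary is a one-line consequence of a theorem established earlier in the paper, and the only points to verify are the trivial bookkeeping $2k+t=2k-\delta$ and the value $F_0=0$. If one wanted a self-contained derivation instead, one could evaluate \eqref{eq.jphe5qh} at $x=\pi/5$ and $x=3\pi/5$ with the shift chosen so that the right-hand factor collapses to $F_0$, invoking \eqref{eq.efvrm4x}, \eqref{eq.efvrm4y} and \eqref{eq.fieo8cp}; but routing through Theorem~\ref{xyz123} is the shortest path, and that is the one I would take.
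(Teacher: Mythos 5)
Your proposal is correct and is exactly the argument the paper intends: the corollary is stated as an immediate application of Theorem~\ref{xyz123}, obtained by choosing $t=-\delta$ in each residue class so that the right-hand side collapses to a multiple of $F_0=0$. The sign adjustment from $(-1)^{k-1}$ to $(-1)^{k}$ is harmless since the sum vanishes, as you note.
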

\begin{corollary}
If $n$ is a positive integer and $t$ is any integer, then we have: 

If $n\equiv 0\pmod 5$, then
\begin{align*}
&\sum_{k = 0}^{\left\lfloor {(n - 1)/2} \right\rfloor } (- 1)^k \binom{n - k - 1}{k} L_{n - 2k + t} 
= 2 \sum_{k = 1}^n (- 1)^{k - 1} \frac{k}{{n + k}} \binom{n + k}{n - k} L_{2k + t},\\
&\sum_{k = 0}^{\left\lfloor {(n - 1)/2} \right\rfloor } (- 1)^k \binom{n - k - 1}{k} F_{n - 2k + t} 
= 2 \sum_{k = 1}^n (- 1)^{k - 1} \frac{k}{{n + k}} \binom{n + k}{n - k} F_{2k + t};
\end{align*}

if $n\equiv 1 \text{ or } 4 \pmod 5$, then 
\begin{align*}
&\sum_{k = 0}^{\left\lfloor {(n - 1)/2} \right\rfloor } (- 1)^k \binom{n - k - 1}{k} L_{n - 2k + t} 
= 2 \sum_{k = 1}^n (- 1)^{k + 1} \frac{k}{{n + k}} \binom{n + k}{n - k} L_{2k - 1 + t},\\
&\sum_{k = 0}^{\left\lfloor {(n - 1)/2} \right\rfloor } (- 1)^k \binom{n - k - 1}{k} F_{n - 2k + t} 
= 2 \sum_{k = 1}^n (- 1)^{k + 1} \frac{k}{{n + k}} \binom{n + k}{n - k} F_{2k - 1 + t};
\end{align*}

if $n\equiv 2 \text{ or } 3 \pmod 5$, then
\begin{align*}
&\sum_{k = 0}^{\left\lfloor {(n - 1)/2} \right\rfloor } (- 1)^k \binom{n - k - 1}{k} L_{n - 2k + t} 
= 2 \sum_{k = 1}^n (- 1)^{k} \frac{k}{{n + k}} \binom{n + k}{n - k} L_{2k + 1 + t},\\
&\sum_{k = 0}^{\left\lfloor {(n - 1)/2} \right\rfloor } (- 1)^k \binom{n - k - 1}{k} F_{n - 2k + t} 
= 2 \sum_{k = 1}^n (- 1)^{k} \frac{k}{{n + k}} \binom{n + k}{n - k} F_{2k + 1 + t}.
\end{align*}
\end{corollary}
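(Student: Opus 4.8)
The plan is to reduce each of the three pairs of identities to a comparison of two closed forms that are already (essentially) in hand: one for the ``short'' sum $S_n(t):=\sum_{k=0}^{\lfloor(n-1)/2\rfloor}(-1)^k\binom{n-k-1}{k}L_{n-2k+t}$ on the left-hand side, which I would extract directly from \eqref{eq.dbuskhn}, and one for the ``long'' sum on the right-hand side, which is literally Theorem~\ref{xyz123} after a harmless reindexing. Once both closed forms are computed, one checks that they agree in each residue class of $n$ modulo $5$; the Fibonacci identities come out of the same computation by replacing $L_r=\alpha^r+\beta^r$ with $F_r=(\alpha^r-\beta^r)/\sqrt5$, i.e.\ subtracting instead of adding and dividing by $\sqrt5$ (see \eqref{eq.fieo8cp}).

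First I would evaluate $S_n(t)$. Putting $x=\pi/5$ in \eqref{eq.dbuskhn} and using $2\cos(\pi/5)=\alpha$ gives $\sum_{k=0}^{\lfloor(n-1)/2\rfloor}(-1)^k\binom{n-k-1}{k}\alpha^{\,n-2k-1}=\sin(n\pi/5)/\sin(\pi/5)$; multiplying through by $\alpha^{t+1}$ and invoking \eqref{eq.efvrm4x} shows that $\sum_k(-1)^k\binom{n-k-1}{k}\alpha^{\,n-2k+t}$ equals $0$, $(-1)^{\lfloor n/5\rfloor}\alpha^{t+1}$, or $(-1)^{\lfloor n/5\rfloor}\alpha^{t+2}$ according as $n\equiv0$, $n\equiv1,4$, or $n\equiv2,3\pmod5$. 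Repeating with $x=3\pi/5$, $2\cos(3\pi/5)=\beta$ and \eqref{eq.efvrm4y} gives the companion statement with $\alpha$ replaced by $\beta$ everywhere. Adding the two (using $L_r=\alpha^r+\beta^r$), respectively subtracting and dividing by $\sqrt5$, yields $S_n(t)=0$, $(-1)^{\lfloor n/5\rfloor}L_{t+1}$, $(-1)^{\lfloor n/5\rfloor}L_{t+2}$ in the three cases, and the analogous statement with $F$ in place of $L$.

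It then remains to recognize the right-hand sums as instances of Theorem~\ref{xyz123}. For $n\equiv0\pmod5$ both sides are $0$. For $n\equiv1$ or $4$ one has $(-1)^{k+1}=(-1)^{k-1}$, so the right-hand side is $2\sum_{k=1}^n(-1)^{k-1}\tfrac{k}{n+k}\binom{n+k}{n-k}L_{2k+(t-1)}$, which by Theorem~\ref{xyz123} with $t$ replaced by $t-1$ equals $2\cdot(-1)^{\lfloor n/5\rfloor}L_{t+1}/2=(-1)^{\lfloor n/5\rfloor}L_{t+1}=S_n(t)$. For $n\equiv2$ or $3$ one has $(-1)^k=-(-1)^{k-1}$, so the right-hand side is $-2\sum_{k=1}^n(-1)^{k-1}\tfrac{k}{n+k}\binom{n+k}{n-k}L_{2k+(t+1)}$, which by Theorem~\ref{xyz123} with $t$ replaced by $t+1$ equals $-2\cdot(-1)^{\lfloor n/5\rfloor+1}L_{t+2}/2=(-1)^{\lfloor n/5\rfloor}L_{t+2}=S_n(t)$; the Fibonacci versions are word-for-word the same. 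The only thing needing care is the clerical matching of the signs $(-1)^{\lfloor n/5\rfloor}$ and $(-1)^{\lfloor n/5\rfloor+1}$ together with the argument shifts $t\mapsto t\pm1$, so that Theorem~\ref{xyz123} applies verbatim and the leading factor $2$ cancels the $1/2$ in its closed form; there is no genuine obstacle beyond that bookkeeping.
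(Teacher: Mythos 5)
Your proposal is correct and follows essentially the same route as the paper, whose proof is simply ``Compare Theorem~\ref{xyz123} with Theorem~\ref{thmxxxyyy1}'': you evaluate the left-hand sum via \eqref{eq.dbuskhn} at $x=\pi/5$ and $x=3\pi/5$ (which is exactly how Theorem~\ref{thmxxxyyy1} is proved, here just inlined with the shift $n\mapsto n-1$, $t\mapsto t+1$) and match the result against Theorem~\ref{xyz123} with $t\mapsto t\mp1$. The bookkeeping of signs and shifts in your third paragraph checks out in all three residue classes.
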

\begin{proof}
Compare Theorem \ref{xyz123} with Theorem \ref{thmxxxyyy1}.
\end{proof}
\begin{lemma}\label{lem9}
If $n$ is a non-negative integer, then
\begin{equation}\label{eq.dom6fjv}
\sum_{k = 0}^n {( - 1)^{n-k} 4^k \binom{n + k}{n - k}\cos ^{2k} x}  =  \frac{{\sin ((2n + 1)x)}}{{\sin x}}.
\end{equation}
\end{lemma}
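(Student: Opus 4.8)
The plan is to identify the right-hand side of \eqref{eq.dom6fjv} as a Chebyshev polynomial of the second kind and then to match it, term by term, against the explicit monomial expansion of that polynomial.

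First I would recall the classical evaluation
\[
U_m(\cos x) = \frac{\sin((m+1)x)}{\sin x}, \qquad m \ge 0,
\]
which is immediate from the Binet-like formula for $U_m$ recorded above, upon setting the argument to $\cos x$ and $\sqrt{x^2-1}$ to $i\sin x$, so that $\cos x \pm \sqrt{x^2-1}$ becomes $e^{\pm i x}$. Taking $m = 2n$ shows that the right side of \eqref{eq.dom6fjv} equals $U_{2n}(\cos x)$, so it suffices to establish the polynomial identity
\[
U_{2n}(y) = \sum_{k=0}^n (-1)^{n-k} 4^k \binom{n+k}{n-k} y^{2k}, \qquad y \in \mathbb{C}.
\]

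Second, I would invoke the standard closed form $U_m(y) = \sum_{j=0}^{\lfloor m/2 \rfloor} (-1)^j \binom{m-j}{j}(2y)^{m-2j}$, provable by induction from the recurrence $U_{m+1} = 2yU_m - U_{m-1}$, or read off from the generating function $\sum_{m\ge 0} U_m(y)t^m = (1 - 2yt + t^2)^{-1}$. Specializing $m = 2n$ gives $U_{2n}(y) = \sum_{j=0}^n (-1)^j \binom{2n-j}{j} 4^{n-j} y^{2n-2j}$, and the index substitution $k = n-j$ converts this term by term into the displayed sum. Together with the first step this proves \eqref{eq.dom6fjv}.

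The argument carries no real obstacle beyond getting the reindexing right: one must check that $j = n-k$ turns $\binom{2n-j}{j}$ into $\binom{n+k}{n-k}$, turns $4^{n-j}$ into $4^k$, and that the range $0 \le j \le n$ matches $0 \le k \le n$. If a proof confined to the machinery already assembled in this section were preferred, the same polynomial identity can alternatively be deduced from \eqref{eq.krxoelz} via $T_{2n+1}'(y) = (2n+1)U_{2n}(y)$, or by the route used to obtain the companion expansion of $U_{2n-1}$ from \cite{Adegoke4}; a fully self-contained induction on $n$ also works, using the doubled recurrence $U_{2n+2} = (4y^2 - 2)U_{2n} - U_{2n-2}$ with base cases $n = 0, 1$ verified directly.
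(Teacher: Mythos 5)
Your proof is correct and follows essentially the same route as the paper: both identify the sum as $U_{2n}$ evaluated at $\cos x$ and then use $U_{2n}(\cos x)=\sin((2n+1)x)/\sin x$. The only difference is that the paper simply cites the polynomial identity $\sum_{k=0}^n(-1)^{n-k}4^k\binom{n+k}{n-k}x^{2k}=U_{2n}(x)$ from \cite{Adegoke4}, whereas you derive it yourself from the standard monomial expansion of $U_m$ via the (correctly executed) reindexing $k=n-j$, which makes the argument self-contained.
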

\begin{proof}
Evaluate the identity \cite{Adegoke4}
\begin{equation*}
\sum_{k = 0}^n {( - 1)^{n-k} 4^k \binom{n + k}{n - k}x^{2k} }  =  U_{2n}(x)
\end{equation*}
at $x=\cos x$.
\end{proof}
\begin{lemma}\label{lem.qdc1y7v}
If $n$ is an integer, then
\begin{equation*}
\frac{{\sin \big((2n + 1)\pi /5\big)}}{{\sin (\pi /5)}} =  
\begin{cases}
 1, &\text{\rm if $n\equiv 0\pmod 5$};\\ 
 \alpha , &\text{\rm if $n\equiv 1\pmod 5$};\\ 
  0, &\text{\rm if $n\equiv 2\pmod 5$};\\ 
  - \alpha , &\text{\rm if $n\equiv 3\pmod 5$};\\ 
 - 1, &\text{\rm if $n\equiv 4\pmod 5$}. 
 \end{cases}
\end{equation*}
\end{lemma}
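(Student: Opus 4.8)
The plan is to exploit the $2\pi$-periodicity of the sine. Replacing $n$ by $n+5$ changes the argument $(2n+1)\pi/5$ by exactly $2\pi$, so the quantity $\sin\bigl((2n+1)\pi/5\bigr)/\sin(\pi/5)$ depends only on $n \bmod 5$; hence it suffices to verify the formula for the five representatives $n=0,1,2,3,4$.

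The only non-elementary ingredient needed is the value $\dfrac{\sin(2\pi/5)}{\sin(\pi/5)} = 2\cos(\pi/5) = \alpha$, which follows from $\sin 2x = 2\sin x\cos x$ together with $\cos(\pi/5)=\alpha/2$ (already used above, in the proof of \eqref{eq.kxkaziy}). With this in hand the five cases are immediate: for $n=0$ the ratio is $1$; for $n=1$ it is $\sin(3\pi/5)/\sin(\pi/5) = \sin(2\pi/5)/\sin(\pi/5) = \alpha$ via $\sin(\pi-\theta)=\sin\theta$; for $n=2$ it is $\sin\pi/\sin(\pi/5) = 0$; for $n=3$ it is $\sin(7\pi/5)/\sin(\pi/5) = -\sin(2\pi/5)/\sin(\pi/5) = -\alpha$ via $\sin(\pi+\theta)=-\sin\theta$; and for $n=4$ it is $\sin(9\pi/5)/\sin(\pi/5) = -\sin(\pi/5)/\sin(\pi/5) = -1$ via $\sin(2\pi-\theta)=-\sin\theta$.

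Alternatively, the lemma can be read off directly from \eqref{eq.efvrm4x} by substituting $2n+1$ in place of $n$: as $n$ runs through $0,1,2,3,4 \pmod 5$, the value $2n+1$ runs through $1,3,0,2,4 \pmod 5$, so the three cases of \eqref{eq.efvrm4x} separate the five residue classes of $n$ exactly as asserted, while the sign $(-1)^{\lfloor (2n+1)/5\rfloor}$ equals $+1$ for the representatives $n=0,1$ and $-1$ for $n=3,4$, matching the table. There is no genuine obstacle here; the only points requiring care are the modular bookkeeping that pairs each residue of $n$ with the correct case, and confirming that the irrational entry comes out as $\alpha$ rather than $\beta$ or $2\cos(2\pi/5)$.
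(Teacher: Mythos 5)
Your proof is correct. The paper states this lemma without any proof at all; your argument---reduce to the representatives $n=0,1,2,3,4$ via the $2\pi$-periodicity under $n\mapsto n+5$, then evaluate each case using $\sin(2\pi/5)=2\sin(\pi/5)\cos(\pi/5)$ and $\cos(\pi/5)=\alpha/2$---is exactly the elementary verification the authors evidently intend, and your alternative derivation by substituting $2n+1$ into \eqref{eq.efvrm4x} is equally valid and consistent with the paper's own style of deducing such tables from earlier lemmas.
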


From Lemmas \ref{lem9} and \ref{lem.qdc1y7v} we can deduce the following Fibonacci and Lucas binomial  identities modulo 5.
\begin{theorem}
If $n$ is a non-negative integer and $t$ is any integer, then
\begin{equation*}
\sum_{k = 0}^n {( - 1)^{n-k} \binom{n + k}{n - k}L_{2k + t} } =  
\begin{cases}
 L_t, &\text{\rm if $n\equiv 0\pmod 5$};\\
 L_{t + 1} , &\text{\rm if $n\equiv 1\pmod 5$};\\ 
 0, &\text{\rm if $n\equiv 2\pmod 5$};\\ 
 - L_{t + 1} , &\text{\rm if $n\equiv 3\pmod 5$};\\
 -L_t , &\text{\rm if $n\equiv 4\pmod 5$}; 
 \end{cases}
\end{equation*}
\begin{equation*}
\sum_{k = 0}^n {( - 1)^{n-k} \binom{n + k}{n - k}F_{2k + t} } =  
\begin{cases}
F_t, &\text{\rm if $n\equiv 0\pmod 5$};\\
 F_{t + 1} , &\text{\rm if $n\equiv 1\pmod 5$};\\ 
 0, &\text{\rm if $n\equiv 2\pmod 5$};\\ 
  - F_{t + 1} , &\text{\rm if $n\equiv 3\pmod 5$};\\
  -F_t , &\text{\rm if $n\equiv 4\pmod 5$}.  
 \end{cases}
\end{equation*}
\end{theorem}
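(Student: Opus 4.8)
The plan is to evaluate the trigonometric identity \eqref{eq.dom6fjv} of Lemma~\ref{lem9} at $x=\pi/5$, where $\cos(\pi/5)=\alpha/2$. Since $4\cos^2(\pi/5)=\alpha^2$, we have $4^k\cos^{2k}(\pi/5)=\alpha^{2k}$, so the left-hand side of \eqref{eq.dom6fjv} collapses to $\sum_{k=0}^n(-1)^{n-k}\binom{n+k}{n-k}\alpha^{2k}$, while the right-hand side becomes $\sin\big((2n+1)\pi/5\big)/\sin(\pi/5)$, whose value in the five residue classes is supplied by Lemma~\ref{lem.qdc1y7v}.

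Next I would multiply the resulting identity through by $\alpha^t$. Using $\alpha\cdot\alpha^t=\alpha^{t+1}$, the right-hand side becomes, according to Lemma~\ref{lem.qdc1y7v}, precisely $\alpha^t,\ \alpha^{t+1},\ 0,\ -\alpha^{t+1},\ -\alpha^t$ for $n\equiv 0,1,2,3,4\pmod 5$, respectively, so that
\begin{equation*}
\sum_{k=0}^n(-1)^{n-k}\binom{n+k}{n-k}\alpha^{2k+t}=
\begin{cases}
\alpha^t, & n\equiv 0\pmod 5;\\
\alpha^{t+1}, & n\equiv 1\pmod 5;\\
0, & n\equiv 2\pmod 5;\\
-\alpha^{t+1}, & n\equiv 3\pmod 5;\\
-\alpha^t, & n\equiv 4\pmod 5.
\end{cases}
\end{equation*}

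Finally, I would pass to the Fibonacci and Lucas forms via \eqref{eq.fieo8cp}. Multiplying the displayed identity by $2$ and replacing each $2\alpha^{2k+t}$ by $L_{2k+t}+F_{2k+t}\sqrt5$ (and likewise on the right), one obtains an equality in $\mathbb{Q}(\sqrt5)$ in which the two binomial sums $\sum(-1)^{n-k}\binom{n+k}{n-k}L_{2k+t}$ and $\sum(-1)^{n-k}\binom{n+k}{n-k}F_{2k+t}$ are rational (indeed, integers). Since $1$ and $\sqrt5$ are linearly independent over $\mathbb{Q}$, equating rational parts yields the stated Lucas identity and equating the coefficients of $\sqrt5$ yields the stated Fibonacci identity. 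The case $n=0$ reduces to the single term $\binom{0}{0}L_t=L_t$ (resp.\ $F_t$), consistent with the $n\equiv 0$ branch.

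There is no genuine obstacle here: the argument is the same template already used for Theorems~\ref{th1}, \ref{thmxxxyyy1}, \ref{th7} and \ref{th11}, and the only thing to watch is the bookkeeping of the five residue classes together with the $\alpha^t$-versus-$\alpha^{t+1}$ shift and the signs coming from Lemma~\ref{lem.qdc1y7v}. One could equally run the same computation with $x=2\pi/5$ (where $\cos(2\pi/5)=-\beta/2$, so $4^k\cos^{2k}(2\pi/5)=\beta^{2k}$) to obtain the companion identity in $\beta$ directly, but this is unnecessary once the rational/irrational splitting via \eqref{eq.fieo8cp} is invoked.
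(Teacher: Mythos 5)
Your proposal is correct and follows essentially the same route as the paper: set $x=\pi/5$ in \eqref{eq.dom6fjv}, apply Lemma \ref{lem.qdc1y7v}, multiply by $\alpha^t$, and split into Lucas and Fibonacci parts via \eqref{eq.fieo8cp} and the linear independence of $1$ and $\sqrt5$ over $\mathbb{Q}$. The bookkeeping of the five residue classes and signs is also correct.
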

\begin{proof}
Set $x=\pi/5$ in \eqref{eq.dom6fjv} and use Lemma \ref{lem.qdc1y7v}.
\end{proof}
\begin{lemma}[{\cite[(41.2.16.1)]{hansen}}]
If $n$ is a positive integer and $x$ is any variable, then
\begin{equation}\label{eq.cscwk7d}
\sum_{k = 1}^n {\frac{{( - 1)^k }}{{\cos x - \cos (\pi k/n)}}}  = \frac{1}{2}\left( {\frac{1}{{1 - \cos x}} + \frac{{( - 1)^n }}{{1 + \cos x}}} \right) - \frac{n}{\sin x\sin nx}.
\end{equation}
\end{lemma}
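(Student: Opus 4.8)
The plan is to view the identity, after the substitution $t=\cos x$, as a partial fraction decomposition of an explicit rational function of $t$, and to compute the required residues with the help of Chebyshev polynomials of the second kind.

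First I would recall the standard evaluation $U_{n-1}(\cos\theta)=\sin(n\theta)/\sin\theta$, which is immediate from the Binet-like formula for $U_n$ given above. With $t=\cos x$ this yields $(1-t^2)\,U_{n-1}(t)=\sin x\sin nx$, so the term $n/(\sin x\sin nx)$ becomes $n/Q(t)$ with $Q(t):=(1-t^2)\,U_{n-1}(t)$, a polynomial of degree $n+1$. Its zeros are precisely the numbers $\cos(\pi k/n)$ for $k=0,1,\dots,n$: the $n-1$ interior zeros come from $U_{n-1}$, while the factor $1-t^2$ supplies $t=1$ (i.e.\ $k=0$) and $t=-1$ (i.e.\ $k=n$); all of them are simple because $U_{n-1}(\pm1)=(\pm1)^{n-1}n\neq0$. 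Since $n/Q(t)$ is a proper rational function it admits the expansion $n/Q(t)=\sum_{k=0}^{n}A_k/\bigl(t-\cos(\pi k/n)\bigr)$ with $A_k=n/Q'\bigl(\cos(\pi k/n)\bigr)$.

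Next I would evaluate these residues. From $Q'(t)=-2t\,U_{n-1}(t)+(1-t^2)\,U_{n-1}'(t)$, the vanishing of $U_{n-1}$ at the interior nodes, and the elementary relation $U_{n-1}'\bigl(\cos(\pi k/n)\bigr)=-n(-1)^k/\sin^2(\pi k/n)$ — obtained by differentiating $\sin\theta\,U_{n-1}(\cos\theta)=\sin n\theta$ at $\theta=\pi k/n$ — one gets $Q'\bigl(\cos(\pi k/n)\bigr)=n(-1)^{k+1}$ for $1\le k\le n-1$, hence $A_k=-(-1)^k$ there. At the endpoints $Q'(1)=-2U_{n-1}(1)=-2n$ and $Q'(-1)=2U_{n-1}(-1)=2(-1)^{n-1}n$, so $A_0=-\tfrac12$ and $A_n=-\tfrac{(-1)^n}{2}$. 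This determines the partial fraction expansion of $n/Q(t)$ completely.

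Finally I would extract the sum $S(t):=\sum_{k=1}^{n}(-1)^k/\bigl(t-\cos(\pi k/n)\bigr)$. For the interior indices $1\le k\le n-1$ the weight $(-1)^k$ equals $-A_k$, so adding the partial fraction expansion of $n/Q(t)$ to $S(t)$ cancels all interior terms; collecting the residual contributions at $t=1$ and $t=-1$ leaves $S(t)+n/Q(t)=A_0/(t-1)-A_n/(t+1)$. Inserting $A_0=-\tfrac12$, $A_n=-\tfrac{(-1)^n}{2}$ and $n/Q(t)=n/(\sin x\sin nx)$ and returning to $t=\cos x$ reproduces exactly $\tfrac12\bigl(\tfrac1{1-\cos x}+\tfrac{(-1)^n}{1+\cos x}\bigr)-\tfrac{n}{\sin x\sin nx}$. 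The one genuinely delicate point is the asymmetric role of the two endpoint poles $t=\pm1$: their residues $-\tfrac12$ and $-\tfrac{(-1)^n}{2}$ break the clean pattern $A_k=-(-1)^k$ valid at the interior nodes, and only the node $t=-1$ lies in the summation range while $t=1$ does not — this is exactly what produces the two distinct boundary fractions on the right-hand side. (An alternative route is to differentiate $\sin nx=2^{n-1}\prod_{j=0}^{n-1}\sin(x+j\pi/n)$ logarithmically, but converting the resulting cotangent sum into the stated secant-type form is noticeably messier, so I would prefer the residue computation above.)
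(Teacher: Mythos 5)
Your argument is correct, and it goes beyond the paper: the paper offers no proof of this lemma at all, simply citing it as formula (41.2.16.1) from Hansen's table, so your residue computation supplies a self-contained derivation. I checked the key steps: $Q(t)=(1-t^2)U_{n-1}(t)$ has the $n+1$ simple zeros $\cos(\pi k/n)$, $k=0,\dots,n$; differentiating $\sin\theta\,U_{n-1}(\cos\theta)=\sin n\theta$ at $\theta=\pi k/n$ does give $U_{n-1}'\bigl(\cos(\pi k/n)\bigr)=-n(-1)^k/\sin^2(\pi k/n)$, hence $Q'\bigl(\cos(\pi k/n)\bigr)=n(-1)^{k+1}$ and $A_k=-(-1)^k$ for $1\le k\le n-1$, while $Q'(\pm1)=\mp2U_{n-1}(\pm1)$ yields $A_0=-\tfrac12$ and $A_n=-\tfrac{(-1)^n}{2}$; and the bookkeeping at $t=-1$ (where the residue $-\tfrac{(-1)^n}{2}$ combines with the $k=n$ summand $(-1)^n/(t+1)$ to leave $\tfrac{(-1)^n}{2(1+\cos x)}$) works out exactly to the stated right-hand side. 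Your proof has the added virtue of making transparent why the two boundary terms $\tfrac1{1-\cos x}$ and $\tfrac{(-1)^n}{1+\cos x}$ enter asymmetrically — only the pole at $t=-1$ lies inside the summation range — which a bare citation cannot convey. The argument fits naturally with the paper's toolkit, since the Binet-like formula for $U_n$ and the evaluation $U_{n-1}(\cos\theta)=\sin n\theta/\sin\theta$ are already used in Section 4 (e.g., in Lemma \ref{lem9}).
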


Further interesting identities involving Fibonacci and Lucas numbers are stated in the next
theorem.
\begin{theorem}\label{thm.z1mhbc2}
If $n$ is a positive integer and $t$ is any integer, then
\begin{equation*}
\begin{split}
&\sum_{k = 1}^n {\frac{( - 1)^{k - 1} \big( {L_{t - 1}  + 2L_t \cos (\pi k/n)} \big)}{{4\cos ^2 (\pi k/n) - 2\cos (\pi k/n) - 1}}} \\
&\qquad\qquad= \frac{1}{2}\big( {L_{t + 2}  + ( - 1)^n F_{t - 1} } \big) - 2( - 1)^{\left\lfloor {n/5} \right\rfloor}n\cdot\begin{cases}
 0,&\text{\rm if $n\equiv 0\pmod5$}; \\ 
 F_{t + 1},&\text{\rm  if $n\equiv 1$ or $4\pmod 5$};  \\ 
 F_t,&\text{\rm  if $n\equiv 2$ or $3\pmod 5$};
 \end{cases} 
\end{split}
\end{equation*}
\begin{equation*}
\begin{split}
&\sum_{k = 1}^n {\frac{{( - 1)^{k - 1} \big( {F_{t - 1}  + 2F_t \cos (\pi k/n)} \big)}}{{4\cos ^2 (\pi k/n) - 2\cos (\pi k/n) - 1}}} \\
&\qquad\qquad\,\,= \frac{1}{2}\Big( {F_{t + 2}  + \frac{( - 1)^n}5 L_{t - 1} } \Big) - \frac{2( - 1)^{\left\lfloor {n/5} \right\rfloor }}{5}n\cdot 
\begin{cases}
 0,&\text{\rm if $n\equiv 0\pmod5$}; \\ 
  L_{t + 1},&\text{\rm  if $n\equiv 1$ or $4\pmod 5$};  \\ 
  L_t,&\text{\rm if $n\equiv 2$ or $3\pmod 5$}.
 \end{cases} 
\end{split}
\end{equation*}
\end{theorem}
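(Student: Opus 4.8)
The plan is to specialise the rational-function identity \eqref{eq.cscwk7d} at the two points $x=\pi/5$ and $x=3\pi/5$, where $\cos x=\alpha/2$ and $\cos x=\beta/2$ respectively, and then to combine the two resulting identities so that the summand of Theorem~\ref{thm.z1mhbc2} appears on the left. The starting point is the factorisation
\[
4\cos^2\theta-2\cos\theta-1=4\Big(\cos\theta-\frac{\alpha}{2}\Big)\Big(\cos\theta-\frac{\beta}{2}\Big),
\]
which holds because $\frac{\alpha}{2}+\frac{\beta}{2}=\frac12$ and $\frac{\alpha}{2}\cdot\frac{\beta}{2}=-\frac14$. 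Writing the summand over this factored denominator and performing the partial-fraction split in the variable $c=\cos(\pi k/n)$, one meets the purely algebraic identities
\[
L_{t-1}+L_t\alpha=\alpha^{t+1}+\alpha^{t-1},\qquad L_{t-1}+L_t\beta=\beta^{t+1}+\beta^{t-1}
\]
(and the analogous identities for $F$, obtained after dividing by $\sqrt5$), all immediate from $\alpha\beta=-1$. They give
\[
\frac{(-1)^{k-1}\big(L_{t-1}+2L_t\cos(\pi k/n)\big)}{4\cos^2(\pi k/n)-2\cos(\pi k/n)-1}
=\frac{(-1)^{k}}{2\sqrt5}\left(\frac{\alpha^{t+1}+\alpha^{t-1}}{\alpha/2-\cos(\pi k/n)}-\frac{\beta^{t+1}+\beta^{t-1}}{\beta/2-\cos(\pi k/n)}\right),
\]
and the same formula for the Fibonacci summand, with the inner minus sign replaced by a plus and the prefactor $\frac{1}{2\sqrt5}$ replaced by $\frac1{10}$.

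Next I would sum over $k=1,\dots,n$ and insert \eqref{eq.cscwk7d} with $x=\pi/5$ and with $x=3\pi/5$. This splits the left-hand side of the theorem into a ``rational'' part, coming from $\frac12\big(\frac{1}{1-\cos x}+\frac{(-1)^n}{1+\cos x}\big)$, and a ``trigonometric'' part, coming from $-\frac{n}{\sin x\sin nx}$. For the rational part one uses $1-\frac{\alpha}{2}=\frac{\beta^2}{2}$, $1+\frac{\alpha}{2}=\frac{\sqrt5\,\alpha}{2}$, $1-\frac{\beta}{2}=\frac{\alpha^2}{2}$ and $1+\frac{\beta}{2}=-\frac{\sqrt5\,\beta}{2}$ (again consequences of $\alpha^2=\alpha+1$ and $\alpha\beta=-1$) to reduce everything to powers of $\alpha$ and $\beta$, and then collapses those using $\alpha^m-\beta^m=\sqrt5\,F_m$, $\alpha^m+\beta^m=L_m$, $F_{m+1}+F_{m-1}=L_m$ and $L_{m+1}+L_{m-1}=5F_m$; this turns the rational part into $\frac12\big(L_{t+2}+(-1)^nF_{t-1}\big)$ in the Lucas case and into $\frac12\big(F_{t+2}+\frac{(-1)^n}{5}L_{t-1}\big)$ in the Fibonacci case.

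For the trigonometric part I would write $1/(\sin x\sin nx)=(\sin^2 x)^{-1}(\sin nx/\sin x)^{-1}$, use $\sin^2(\pi/5)=-\frac{\sqrt5\,\beta}{4}$ and $\sin^2(3\pi/5)=\frac{\sqrt5\,\alpha}{4}$, and substitute the values of $\sin(n\pi/5)/\sin(\pi/5)$ and $\sin(3n\pi/5)/\sin(3\pi/5)$ supplied by Lemma~\ref{lem3}, \eqref{eq.efvrm4x}--\eqref{eq.efvrm4y}. Treating the residue classes $n\equiv1,4$ and $n\equiv2,3\pmod 5$ separately, and simplifying once more with $L_{m+1}+L_{m-1}=5F_m$, produces the term $-2(-1)^{\lfloor n/5\rfloor}n\,F_{t+1}$ for $n\equiv1,4$ and the term $-2(-1)^{\lfloor n/5\rfloor}n\,F_t$ for $n\equiv2,3$, the extra factor $\frac15$ emerging automatically in the Fibonacci version; throughout, the passage from $\alpha$--$\beta$ expressions to Fibonacci and Lucas numbers is carried out via \eqref{eq.fieo8cp}. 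Assembling the rational and trigonometric parts yields the claimed formulas.

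The effort here is almost entirely bookkeeping: one must keep the two specialisations, the two partial fractions and the five residue classes straight while repeatedly rewriting $\alpha$--$\beta$ combinations as $F$'s and $L$'s. The one genuinely delicate point is the class $n\equiv0\pmod 5$: there $\cos(\pi k/n)$ hits the roots $\alpha/2$ and $\beta/2$ of $4c^2-2c-1$ at $k=n/5$ and $k=3n/5$, so two summands on the left become singular, mirroring exactly the pole of $-\frac{n}{\sin x\sin nx}$ in \eqref{eq.cscwk7d} when $\sin(n\pi/5)=0$. I would handle this by applying \eqref{eq.cscwk7d} for a general $x$, forming the combination above, and only then letting $x\to\pi/5$ and $x\to3\pi/5$: the pole of each side of \eqref{eq.cscwk7d} cancels, and the surviving finite part reproduces the stated value with null trigonometric term, so the identity there is to be read as an equality of finite parts.
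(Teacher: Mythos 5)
Your proof follows the paper's own route exactly: specialize \eqref{eq.cscwk7d} at $x=\pi/5$ and $x=3\pi/5$, split the summand by partial fractions over the roots $\alpha/2$ and $\beta/2$ of $4c^2-2c-1$, and convert the resulting $\alpha$--$\beta$ expressions via \eqref{eq.fieo8cp}; the intermediate identities you quote (e.g.\ $L_{t-1}+L_t\alpha=\alpha^{t+1}+\alpha^{t-1}$, $\sin^2(\pi/5)=-\sqrt5\,\beta/4$) all check out and reproduce the stated right-hand sides. Your additional observation that for $n\equiv0\pmod 5$ the terms $k=n/5$ and $k=3n/5$ make the left-hand sum literally singular --- so that this residue class must be interpreted as a limit or finite part --- is a genuine and worthwhile point that the paper's proof passes over in silence.
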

\begin{proof}
Set $x=\pi/5$ and $x=3\pi/5$, in turn, in \eqref{eq.cscwk7d} to obtain
\begin{equation*}
2\sum_{k = 1}^n {\frac{{( - 1)^k }}{{\alpha  - 2\cos (\pi k/n)}}}  = \frac{1}{{2 - \alpha }} + \frac{{( - 1)^n }}{{2 + \alpha }} - \frac{4n\alpha}{\sqrt5} \frac{{\sin (\pi /5)}}{{\sin (n\pi /5)}}
\end{equation*}
and
\begin{equation*}
2\sum_{k = 1}^n {\frac{{( - 1)^k }}{{\beta  - 2\cos (\pi k/n)}}}  = \frac{1}{{2 - \beta }} + \frac{{( - 1)^n }}{{2 + \beta }} + \frac{4n\beta}{\sqrt5} \frac{{\sin (3\pi /5)}}{{\sin (3n\pi /5)}};
\end{equation*}
from which the identities follow.
\end{proof}

By setting $t=0$ and $t=1$ in Theorem \ref{thm.z1mhbc2}, we obtain the following.
\begin{corollary}
If $n$ is a positive integer, then
\begin{align*}
&\sum_{k = 1}^n {\frac{{( - 1)^{k - 1} \left( {4\cos (\pi k/n) - 1} \right)}}{{4\cos ^2 (\pi k/n) - 2\cos (\pi k/n) - 1}}} \\
&\qquad\qquad = \frac{{3 + ( - 1)^n }}{2} - 2( - 1)^{\left\lfloor {n/5} \right\rfloor }\,n\cdot
\begin{cases}
 0, &\text{\rm if $n\equiv 0$, $2$ or $3\pmod 5$};\\ 
 1,&\text{\rm otherwise};  
\end{cases}\\
&\sum_{k = 1}^n {\frac{{( - 1)^{k - 1} }}{{4\cos ^2 (\pi k/n) - 2\cos (\pi k/n) - 1}}}\\ 
&\qquad\qquad = \frac{{5 - ( - 1)^{n} }}{{10}} - \frac{2( - 1)^{\left\lfloor {n/5} \right\rfloor }}{5}\, n\cdot 
\begin{cases}
 0, &\text{\rm if $n\equiv 0\pmod 5$}; \\ 
 1,&\text{\rm if $n\equiv 1$ or $4\pmod 5$}; \\ 
 2,&\text{\rm if $n\equiv 2$ or $3\pmod 5$};  
\end{cases}\\
&\sum_{k = 1}^n {\frac{{( - 1)^{k - 1}  {\cos^2 (\pi k/2n)} }}{{4\cos ^2 (\pi k/n) - 2\cos (\pi k/n) - 1}}} = \frac12 - \frac {( - 1)^{\left\lfloor {n/5} \right\rfloor }}{2}\,n\cdot
\begin{cases}
 0, &\text{\rm if $n\equiv 0\pmod 5$};\\ 
 1,&\text{\rm otherwise};  
\end{cases}\\
&\sum_{k = 1}^n {\frac{( - 1)^{k - 1} \cos (\pi k/n)}{{4\cos ^2 (\pi k/n) - 2\cos (\pi k/n) - 1}}}\\ 
&\qquad\qquad = \frac{{5 + ( - 1)^n }}{10} - \frac{( - 1)^{\left\lfloor {n/5} \right\rfloor }}{5}\,n\cdot 
\begin{cases}
 0, &\text{\rm if $n\equiv 0\pmod 5$}; \\ 
 3,&\text{\rm if $n\equiv 1$ or $4\pmod 5$}; \\ 
 1,&\text{\rm if $n\equiv 2$ or $3\pmod 5$}. 
\end{cases}
\end{align*}
\end{corollary}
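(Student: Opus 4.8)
The plan is to obtain all four identities by merely specializing Theorem~\ref{thm.z1mhbc2} at $t=0$ and $t=1$. Beyond the theorem itself, the only ingredients needed are the small values $F_{-1}=1$, $F_0=0$, $F_1=F_2=1$, $F_3=2$ and $L_{-1}=-1$, $L_0=2$, $L_1=1$, $L_2=3$, $L_3=4$ (the negatively indexed ones coming from the reflection formulas $F_{-n}=(-1)^{n-1}F_n$, $L_{-n}=(-1)^nL_n$ recalled in Section~1), together with the half-angle identity $2\cos^2(\theta/2)=1+\cos\theta$.

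First I would set $t=0$ in the Lucas identity of Theorem~\ref{thm.z1mhbc2}. Then $L_{t-1}+2L_t\cos(\pi k/n)=L_{-1}+2L_0\cos(\pi k/n)=4\cos(\pi k/n)-1$, the constant term on the right becomes $\frac12\bigl(L_2+(-1)^nF_{-1}\bigr)=\frac{3+(-1)^n}{2}$, and the case term is $F_{t+1}=F_1=1$ for $n\equiv1,4\pmod5$ and $F_t=F_0=0$ for $n\equiv2,3\pmod5$, which collapses the three branches to the ``$0$ unless $n\equiv1,4$'' form; this is exactly the first displayed identity. Setting $t=0$ in the Fibonacci identity instead, the numerator reduces to $F_{-1}+2F_0\cos(\pi k/n)=1$, the constant term is $\frac12\bigl(F_2+\frac{(-1)^n}{5}L_{-1}\bigr)=\frac{5-(-1)^n}{10}$, and the case term is $L_1=1$ for $n\equiv1,4$ and $L_0=2$ for $n\equiv2,3$, giving the second identity.

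For the remaining two I would take $t=1$. In the Lucas identity the numerator becomes $L_0+2L_1\cos(\pi k/n)=2\bigl(1+\cos(\pi k/n)\bigr)=4\cos^2(\pi k/2n)$ by the half-angle formula, so after dividing both sides by $4$ the left side is precisely the left-hand side of the third identity; on the right, $\frac14\cdot\frac12\bigl(L_3+(-1)^nF_0\bigr)=\frac12$ and the case value $F_2=1$ (resp.\ $F_1=1$) merges the two nonzero branches into the single ``$1$ unless $n\equiv0\pmod5$'' case. In the Fibonacci identity with $t=1$ the numerator becomes $F_0+2F_1\cos(\pi k/n)=2\cos(\pi k/n)$; dividing by $2$ yields the fourth identity, with constant term $\frac12\cdot\frac12\bigl(F_3+\frac{(-1)^n}{5}L_0\bigr)=\frac{5+(-1)^n}{10}$ and case term $\frac{(-1)^{\lfloor n/5\rfloor}}{5}n$ multiplied by $L_2=3$ for $n\equiv1,4$ and by $L_1=1$ for $n\equiv2,3$.

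No step presents a genuine obstacle: the whole argument is substitution and bookkeeping with the Fibonacci and Lucas recursions. The only points requiring a moment's attention are the reduction $2+2\cos(\pi k/n)=4\cos^2(\pi k/2n)$ used in the third case and keeping the signs of $F_{-1}=1$ and $L_{-1}=-1$ correct when forming the numerators and the constant terms in the first two cases.
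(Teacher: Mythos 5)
Your proposal is correct and is exactly the paper's route: the corollary is obtained by specializing Theorem~\ref{thm.z1mhbc2} at $t=0$ and $t=1$, and all of your numerical evaluations ($F_{-1}=1$, $L_{-1}=-1$, the half-angle reduction $2+2\cos(\pi k/n)=4\cos^{2}(\pi k/2n)$, and the resulting constants and case values) check out.
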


\section{Some additional observations}

We close this paper with some additional observations leading to possibly new 
series representations of the constant $\alpha$ involving Bernoulli polynomials. 
Recall that Bernoulli polynomials $B_n(t)$, $n\geq 0$, may be defined by the 
\begin{equation*}
B_n(t) = \sum_{k=0}^n \binom {n}{k} B_{n-k} t^k,
\end{equation*}
where $B_n$ is the $n$th Bernoulli number, defined by the power series
\begin{equation*}
\frac{z}{e^z - 1} = \sum_{n=0}^\infty B_n \frac{z^n}{n!}, \quad |z|< 2\pi.
\end{equation*}
We have $B_n(1)=B_n(0)=B_n$ for all $n\geq 2$ and $B_{2n+1}=0$ for all $n\geq 1$. 
\begin{theorem}
Let $m$ be a non-negative integer. Then
\begin{align}
&\sum_{k=0}^\infty (-1)^k \frac{2^{2k+1}}{(2k+1)!}  \frac{\pi^{2k}}{25^k} B_{2k+1}\Big (\frac{5m}{2}\Big )  
= (-1)^{m-1},\nonumber\\
&\sum_{k=0}^\infty (-1)^k \frac{2^{2k}}{(2k+1)!}  \frac{\pi^{2k}}{25^k}B_{2k+1}\Big (\frac{5m}{2} + \frac12\Big )  
= 0,\nonumber\\
&\sum_{k=0}^\infty (-1)^k \frac{2^{2k+1}}{(2k+1)!}  \frac{\pi^{2k}}{25^k}B_{2k+1}\Big (\frac{5m}{2} + 1\Big )  
= (-1)^{m},\nonumber\\
\label{series1_ber}
&\sum_{k=0}^\infty (-1)^k \frac{2^{2k+1}}{(2k+1)!}  \frac{\pi^{2k}}{25^k} B_{2k+1}\Big (\frac{5m}{2}+\frac32\Big )  
= (-1)^{m} \alpha,
\end{align}
and
\begin{align}\label{series2_ber}
&\sum_{k=0}^\infty(-1)^k \frac{2^{2k+1}}{(2k+1)!}  \frac{\pi^{2k}}{25^k} B_{2k+1}\Big (\frac{5m}{2}+2\Big  )  
= (-1)^{m} \alpha.
\end{align}
\end{theorem}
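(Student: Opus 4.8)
The plan is to reduce all five identities to a single \emph{master} evaluation valid for an arbitrary real parameter $t$, and then to specialise $t$ to the five values $\tfrac{5m}{2},\ \tfrac{5m}{2}+\tfrac12,\ \tfrac{5m}{2}+1,\ \tfrac{5m}{2}+\tfrac32,\ \tfrac{5m}{2}+2$.

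First I would start from the exponential generating function of the Bernoulli polynomials, $\frac{z\,e^{tz}}{e^z-1}=\sum_{n\ge0}B_n(t)\frac{z^n}{n!}$, which converges for $|z|<2\pi$. Replacing $(z,t)$ by $(-z,1-t)$ fixes the left-hand side, so subtracting the two expansions isolates the odd part:
\[
\sum_{k\ge0}B_{2k+1}(t)\frac{z^{2k+1}}{(2k+1)!}=\frac{z\bigl(e^{tz}-e^{(1-t)z}\bigr)}{2\,(e^z-1)}.
\]
Next I would set $z=2ix$ with $x$ real and $0<|x|<\pi$. Since $(2ix)^{2k+1}=i(-1)^k(2x)^{2k+1}$, the left side becomes $i\sum_{k\ge0}(-1)^k\frac{(2x)^{2k+1}}{(2k+1)!}B_{2k+1}(t)$; on the right, pulling $e^{ix}$ out of numerator and denominator collapses the quotient to $\frac{\sin((2t-1)x)}{\sin x}$. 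Cancelling $i$, writing $(2x)^{2k+1}=2x\cdot 4^k x^{2k}$, and dividing by $2x$ gives
\[
\sum_{k\ge0}(-1)^k\frac{4^k x^{2k}}{(2k+1)!}B_{2k+1}(t)=\frac{\sin\bigl((2t-1)x\bigr)}{2\sin x}.
\]
Taking $x=\pi/5$ (so $4^kx^{2k}=2^{2k}\pi^{2k}/25^k$) and multiplying by $2$ yields the master identity
\[
\sum_{k\ge0}(-1)^k\frac{2^{2k+1}}{(2k+1)!}\,\frac{\pi^{2k}}{25^k}\,B_{2k+1}(t)=\frac{\sin\bigl((2t-1)\pi/5\bigr)}{\sin(\pi/5)}.
\]

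Finally I would substitute the five values of $t$ into this identity. In each case $(2t-1)\pi/5$ equals $m\pi$, $m\pi\pm\pi/5$, or $m\pi\pm2\pi/5$; for $t=\tfrac{5m}{2}+\tfrac12$ it is exactly $m\pi$, which gives $0$ and accounts for the second equation (there the coefficient is $2^{2k}$, i.e.\ half the master coefficient, but $0/2=0$). The sine–addition formula turns the remaining four right-hand sides into $-(-1)^m$, $(-1)^m$, or $(-1)^m\,\frac{\sin(2\pi/5)}{\sin(\pi/5)}$, and the last ratio equals $2\cos(\pi/5)=\alpha$ by the double-angle formula together with $\cos(\pi/5)=\alpha/2$; since $\sin(3\pi/5)=\sin(2\pi/5)$, the value $\tfrac{5m}{2}+2$ produces the same constant $\alpha$, giving \eqref{series2_ber}. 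Collecting signs reproduces the five displayed formulas verbatim. The one point deserving care is convergence and the legitimacy of the even/odd splitting: the generating function's nearest singularities are at $z=\pm 2\pi i$, so the choice $z=2i\pi/5$ lies well inside the disc $|z|<2\pi$ and all manipulations are justified by absolute convergence there. Apart from that the argument is mechanical; the only thing to watch is the bookkeeping of the factors $2$, $i$, and $(-1)^k$ in passing from the generating function to the trigonometric form, and I expect no genuine obstacle.
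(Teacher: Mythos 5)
Your proof is correct and follows essentially the same route as the paper: the paper simply cites Leeming's expansion $\frac{\sin xt}{\sin t}=\sum_{k\ge0}(-1)^k\frac{2^{2k+1}}{(2k+1)!}B_{2k+1}\bigl(\frac{1+x}{2}\bigr)t^{2k}$ and combines it with the $\sin(n\pi/5)/\sin(\pi/5)$ evaluations, whereas you re-derive that expansion from the Bernoulli generating function (your ``master identity'' is exactly Leeming's formula with $x=2t-1$, $t=\pi/5$) and then evaluate the sines directly. All the bookkeeping of signs, the factor $2^{2k}$ versus $2^{2k+1}$ in the second identity, and the reduction $\sin(2\pi/5)/\sin(\pi/5)=2\cos(\pi/5)=\alpha$ check out.
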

\begin{proof}
Combine \eqref{eq.efvrm4x} with the representation \cite[Eq.\ (2.5)]{Leeming}
\begin{equation}\label{sin/sin}
\frac{\sin xt}{\sin t} = \sum_{k=0}^\infty (-1)^k \frac{2^{2k+1}}{(2k+1)!}  B_{2k+1}\Big (\frac{1+x}{2}\Big ) t^{2k}, \quad |t|<\pi.
\end{equation}
\end{proof}

When $m=0$ then from \eqref{series1_ber} and  \eqref{series2_ber} we get the special series:
\begin{align*}
&\sum_{k=0}^\infty (-1)^k \frac{2^{2k+1}}{(2k+1)!}  \frac{\pi^{2k}}{25^k}  B_{2k+1}\Big (\frac{3}{2}\Big ) = \alpha,\\
&\sum_{k=0}^\infty (-1)^k \frac{2^{2k+1}}{(2k+1)!}  \frac{\pi^{2k}}{25^k} B_{2k+1} (2)  = \alpha.
\end{align*}

From Raabe's formula
\begin{equation*}
B_n(a x) = a^{n-1} \sum_{k=0}^{a-1} B_n \Big (x + \frac{k}{a}\Big )
\end{equation*}
we get
\begin{equation*}
B_{2k+1}(2) = 2^{2k}\Big (B_{2k+1}(1) + B_{2k+1}\Big (\frac{3}{2}\Big )\Big )
\end{equation*}
and
\begin{equation*}
\sum_{k=0}^\infty (-1)^k \frac{2^{2k+1}}{(2k+1)!}\frac{\pi^{2k}}{25^k}   B_{2k+1}\Big (\frac{3}{2}\Big ) = \alpha,
\end{equation*}
\begin{equation*}
\sum_{k=1}^\infty(-1)^k \frac{2^{4k+1}}{(2k+1)!}  \frac{\pi^{2k}}{25^k}B_{2k+1} \Big (\frac{3}{2}\Big )   = \alpha - 3 = \sqrt{5}\beta.
\end{equation*}
But making use of $B_n(t+1)-B_n(t)=n t^{n-1}$ we see that
\begin{equation*}
B_{2k+1}\Big (\frac{3}{2}\Big ) = \frac{2k+1}{2^{2k}}
\end{equation*}
and thus the series turn into
\begin{equation}\label{last1}
\sum_{k=1}^\infty \frac{(-1)^k}{(2k)!}  \frac{\pi^{2k}}{25^k} = \frac{\alpha}{2} - 1=-\frac{\beta^2}{2}
\end{equation}
and
\begin{equation}\label{last2}
\sum_{k=1}^\infty (-1)^k \frac{2^{2k+1}}{(2k)!} \frac{\pi^{2k}}{25^k} = \sqrt{5}\beta.
\end{equation}

The series \eqref{last1} and \eqref{last2} are essentially $\cosh (i \pi /5)= \cos (\pi/5)=\alpha/2$ and $\cosh(2i \pi/5)=\cos(2\pi/5)=-\beta/2$ 
which we encountered at the beginning of the paper.

Combining \eqref{eq.efvrm4y} with \eqref{sin/sin} we have the following theorem. The details of we
leave to the reader.
\begin{theorem}
	Let $m$ be a non-negative integer. Then
	\begin{align}
	&\sum_{k=0}^\infty (-1)^k \frac{2^{2k+1}}{(2k+1)!}  \frac{9^k\pi^{2k}}{25^k}B_{2k+1}\Big (\frac{5m}{2}\Big )  
	= (-1)^{m-1},\nonumber\\
	&\sum_{k=0}^\infty (-1)^k \frac{2^{2k}}{(2k+1)!}  \frac{9^k\pi^{2k}}{25^k}B_{2k+1}\Big (\frac{5m}{2} + \frac{1}{2}\Big )  
	= 0,\nonumber\\
	&\sum_{k=0}^\infty (-1)^k \frac{2^{2k+1}}{(2k+1)!}  \frac{9^k\pi^{2k}}{25^k} B_{2k+1}\Big (\frac{5m}{2}+1\Big )  
	= (-1)^{m},\nonumber\\
	\label{series1_ber2}
	&\sum_{k=0}^\infty (-1)^k \frac{2^{2k+1}}{(2k+1)!}  \frac{9^k\pi^{2k}}{25^k} B_{2k+1}\Big (\frac{5m}{2}+\frac32\Big )  
	= (-1)^{m} \beta,
	\end{align}
	and
	\begin{align}\label{series2_ber2}
	&\sum_{k=0}^\infty(-1)^k \frac{2^{2k+1}}{(2k+1)!}  \frac{\pi^{2k}}{25^k} B_{2k+1}\Big (\frac{5m}{2}+2\Big  )  
	= (-1)^{m} \beta.
	\end{align}
\end{theorem}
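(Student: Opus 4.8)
The plan is to specialize the Bernoulli-polynomial identity \eqref{sin/sin} at $t=3\pi/5$ and then at an arithmetic progression of integer values of $x$, in exact parallel with the proof of the preceding theorem but using Lemma~\ref{lem3}, in the form \eqref{eq.efvrm4y}, in place of \eqref{eq.efvrm4x}. Since $3\pi/5<\pi$, the value $t=3\pi/5$ is admissible in \eqref{sin/sin}, and with it $t^{2k}=(3\pi/5)^{2k}=9^k\pi^{2k}/25^k$, which is the factor occurring throughout the statement. Thus \eqref{sin/sin} becomes
\[
\frac{\sin(3x\pi/5)}{\sin(3\pi/5)}=\sum_{k=0}^{\infty}(-1)^k\frac{2^{2k+1}}{(2k+1)!}\,\frac{9^k\pi^{2k}}{25^k}\,B_{2k+1}\Big(\frac{1+x}{2}\Big).
\]

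I would then substitute $x=5m-1,\ 5m,\ 5m+1,\ 5m+2,\ 5m+3$ in turn, for which $\tfrac{1+x}{2}$ equals $\tfrac{5m}{2},\ \tfrac{5m}{2}+\tfrac12,\ \tfrac{5m}{2}+1,\ \tfrac{5m}{2}+\tfrac32,\ \tfrac{5m}{2}+2$ --- exactly the five arguments of $B_{2k+1}$ appearing in the theorem. For each such integer $x=n$, the left-hand side is $\sin(3n\pi/5)/\sin(3\pi/5)$, evaluated by Lemma~\ref{lem3} once one identifies the residue of $n$ modulo $5$ and the value $\lfloor n/5\rfloor$: $n=5m-1\equiv4$ with $\lfloor(5m-1)/5\rfloor=m-1$ gives $(-1)^{m-1}$; $n=5m\equiv0$ gives $0$; $n=5m+1\equiv1$ with floor $m$ gives $(-1)^m$; $n=5m+2\equiv2$ with floor $m$ gives $(-1)^m\beta$; and $n=5m+3\equiv3$ with floor $m$ gives $(-1)^m\beta$. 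Substituting these values into the displayed identity produces the five asserted series; in the $x=5m$ case the right-hand side vanishes, so the common factor $2$ may be cancelled, which accounts for the exponent $2^{2k}$ in that line in place of $2^{2k+1}$. For consistency with this argument, the coefficient in the last display \eqref{series2_ber2} should read $9^k\pi^{2k}/25^k$, matching the other four; the parallel identity \eqref{series2_ber} in the $\alpha$-theorem instead comes from the choice $t=\pi/5$.

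No real obstacle is present: the proof is a direct substitution, with the only care required being routine bookkeeping --- verifying convergence via $|3\pi/5|<\pi$, computing $\lfloor n/5\rfloor$ in each residue class, and tracking the resulting parity shift $(-1)^{m-1}$ versus $(-1)^m$. As a sanity check, setting $m=0$ and using the value $B_{2k+1}(3/2)=(2k+1)/2^{2k}$ recorded earlier collapses \eqref{series1_ber2} to $2\sum_{k\ge0}(-1)^k(3\pi/5)^{2k}/(2k)!=2\cos(3\pi/5)=\beta$, recovering one of the special cosine values from the Preliminaries. It is precisely this mechanical character that lets the authors leave the details to the reader.
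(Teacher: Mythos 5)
Your proof is correct and is precisely the argument the paper intends: the paper's entire ``proof'' is the single sentence ``Combining \eqref{eq.efvrm4y} with \eqref{sin/sin} we have the following theorem,'' i.e., set $t=3\pi/5$ in \eqref{sin/sin} and evaluate at $x=5m-1,\,5m,\,5m+1,\,5m+2,\,5m+3$, exactly as you do. You are also right that \eqref{series2_ber2} as printed is missing the factor $9^k$; with $t=3\pi/5$ the coefficient must be $9^k\pi^{2k}/25^k$, and your $m=0$ sanity check confirms the corrected version.
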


Finally, we obtain the following special series as a consequence of  \eqref{series1_ber2} and  \eqref{series2_ber2}:
\begin{align*}
&\sum_{k=0}^\infty (-1)^k \frac{2^{2k+1}}{(2k+1)!}  \frac{9^k\pi^{2k}}{25^k}  B_{2k+1}\Big (\frac{3}{2}\Big ) = \beta,\\
&\sum_{k=0}^\infty (-1)^k \frac{2^{2k+1}}{(2k+1)!}  \frac{9^k\pi^{2k}}{25^k} B_{2k+1} (2)  = \beta.
\end{align*}

\section{Concluding comments}

In this paper, we presented new  closed forms for some types of finite Fibonacci and Lucas sums involving different kinds of binomial coefficients and depending on the modulo 5 nature of the upper summation limit. To prove our results, we applied some trigonometric identities utilizing Waring formulas and Chebyshev polynomials of the first and second kinds. 

Using similar techniques, we can generalize our findings to more common number sequences. Let us give, for example, a generalization of Theorems \ref{th1}, \ref{th7} and \ref{th11} to the case of the gibonacci (generalized Fibonacci) sequence  defined by the recurrence 
$G_n=G_{n-1}+G_{n-2},$ $n\geq2$, with  $G_0=a$ and $G_1=b$, where $a$ and $b$ are arbitrary \cite{Koshy,Vajda}.  Note that $F_n$ corresponds to the case of $G_n$ when $a = 1$ and $b = 0$, while $L_n$ to the case when $a = 1$
and $b = 2$. The following identities modulo 5 hold for positive integer $n$ and any integer $t$: 
\begin{align*}
&n\sum_{k = 1}^{\left\lfloor {n/2} \right\rfloor } \frac{{( - 1)^{k - 1}}}{k}\binom{n - k - 1}{k - 1}G_{n - 2k + t} 
=  \begin{cases}
G_{n + t}  - ( - 1)^{n} 2G_t,&\text{\rm if $n\equiv 0\pmod 5$};  \\ 
G_{n + t}  + ( - 1)^n G_{t + 1},&\text{\rm if $n\equiv 1$ or $4\pmod 5$};  
\\ 
G_{n + t}  - ( - 1)^{n} G_{t - 1},&\text{\rm if $n\equiv 2$ or $3\pmod 5$}; 
\end{cases} \\
&n\sum_{k = 0}^{\left\lfloor {n/2} \right\rfloor }  \frac{(-1)^{n-k}}{n - k} \binom{n - k}{k} G_{n-2k+t} =  
\begin{cases}
2  G_t, & \text{\rm  if $n \equiv 0$}\pmod 5;\\ 
-G_{t+1}, & \text{\rm if $n \equiv 1$ or $4$}\pmod 5; \\ 
G_{t-1}, & \text{\rm if $n \equiv 2$ or $3$}\pmod 5; 
\end{cases}
\end{align*}
and
\begin{align*}
&n\sum_{k = 0}^n {\frac{( - 1)^{n-k} }{n + k}\binom{n + k}{n - k}}G_{2k + t}  = \begin{cases}
G_t,  &\text{\rm if $n=0\pmod 5$};\\ 
G_{t - 1} /2,&\text{\rm if $n=1$ or $4\pmod 5$}; \\ 
-G_{t + 1} /2,&\text{\rm if $n=2$ or $3\pmod 5$}. 
\end{cases} 
\end{align*}

\end{document}